\documentclass[12pt]{article}
\usepackage{amsmath,amsthm,amssymb}
\usepackage{times}
\usepackage{enumerate}
\usepackage{geometry}
\usepackage{xcolor}
\usepackage{verbatim}
\usepackage{amstext}
\usepackage{enumitem}
\usepackage[affil-it]{authblk}
\addtolength{\oddsidemargin}{-0.3in}
\addtolength{\evensidemargin}{-0.2in}

\textheight = 210mm
\textwidth = 173mm

\theoremstyle{plain}
\newtheorem{thm}{Theorem}[section]

   \theoremstyle{remark}
  \newtheorem{rem}[thm]{\protect\remarkname}

\numberwithin{equation}{section}
\numberwithin{figure}{section}

  \theoremstyle{plain}
  \newtheorem{lemma}[thm]{Lemma}
\theoremstyle{example}
  \newtheorem{exam}[thm]{Example}
  \theoremstyle{plain}
  
  \theoremstyle{plain}
  
 \theoremstyle{plain}
  \newtheorem{definition}[thm]{Definition}

\newtheorem*{xthm}{Theorem}

  \newcounter{casectr}

\def\sqr#1#2{{\,\vcenter{\vbox{\hrule height.#2pt\hbox{\vrule width.#2pt
height#1pt \kern#1pt\vrule width.#2pt}\hrule height.#2pt}}\,}}

\newcommand{\vp}{\varphi}
\usepackage{babel}
  \providecommand{\remarkname}{Remark}


\makeatletter
\def\@maketitle{%
  \newpage
  \null
  \vskip 2em%
  \begin{center}%
  \let \footnote \thanks
    {\Large\bfseries \@title \par}%
    \vskip 1.5em%
    {\normalsize
      \lineskip .5em%
      \begin{tabular}[t]{c}%
        \@author
      \end{tabular}\par}%
    \vskip 1em%
  \end{center}%
  \par
  \vskip 1.5em}
\makeatother

\begin{document}

\title{\huge\textmd{A geometric characterisation of real C$^*$-algebras}}

\author{Cho-Ho Chu}

\affil{\it School of Mathematical Sciences, Queen Mary, University of London\\
 London E1 4NS, UK\\ Email: c.chu@qmul.ac.uk}


\maketitle

\begin{abstract}
We characterise the positive cone of a real C*-algebra geometrically.
Given an open cone $\Omega$ in a real Banach space
$V$, with closure $\overline \Omega$, we show that
$\Omega$  is the interior of the positive cone
 of a  unital real C*-algebra if and only if it is a Finsler symmetric cone
 with an orientable extension, which is equivalent to the condition that
$V$ is, in an equivalent norm, the hermitian part of a unital real C*-algebra
with positive cone $\overline\Omega$.
\end{abstract}

\indent {\footnotesize {\bf Keywords:}  Real C*-algebra. Banach manifold. Finsler symmetric cone. Jordan algebra}\\
\indent {\footnotesize {\bf MSC (2020):} 46L05, 58B20, 22E65, 17C65, 46B40}\\

\section{Introduction}

In recent decades, {\it real} C*-algebras have received increasing interest
due to a number of credible applications,
for example, in the classification of manifolds of positive scalar curvature,
 representation theory and orientifold string theories, as noted in \cite{r}.

 A complex C*-algebra can be regarded as a real C*-algebra
 when the underlying scalar field is restricted to the reals. On the other hand, a real C*-algebra can be
 complexified to a complex C*-algebra. Nevertheless, the larger class of real C*-algebras has some
 extra structures.

 {\it Real C*-algebras} are real Banach *-algebras  isometrically
 *-isomorphic to a norm-closed *-algebra of bounded operators on a real Hilbert space.
 They arise from  involutary $*$-antiautomorphisms $\vp$  of complex C*-algebras $\mathcal{A}$
  in that each of them is  of the form
 $$A= \{a\in \mathcal{A}: \vp(a)=a^*\}$$
 where $\mathcal{A}= A+iA$ and $A \cap iA = \{0\}$. We refer to \cite{g} for more details of
 real C*-algebras. Henceforth, C*-algebras are either real or complex unless specified otherwise.

 Positive elements and functionals are fundamental in the theory of
 C*-algebras. The positive cone resides in the hermitian part of a C*-algebra and the normalised positive
 functionals form the state space of a C*-algebra.
In \cite{aos}, state spaces of unital complex C*-algebras have been characterised geometrically
among compact convex sets,
which is the culmination of a series of works in \cite{as1, as3}.

In this paper, we give a simple geometric characterisation of the positive cones of
real unital  C*-algebras (as well as the complex ones) among positive cones in real Banach spaces.
This is equivalent to characterising
geometrically  the hermitian parts of unital C*-algebras among partially ordered Banach spaces.

By the {\it hermitian part of a C*-algebra} $A$, we mean the partially ordered real Banach space
$A_h :=\{a\in A: a^*=a\}$ consisting of hermitian elements in $A$.

 The aforementioned result of state spaces in \cite{aos} is formulated as a characterisation of the hermitian part
 of a unital complex C*-algebra isomorphically and likewise,
 our result is stated as a characterisation of the hermitian part
 of a real C*-algebra, as follows.

 \begin{xthm} Let $V$ be a real Banach space with  an open cone $\Omega$
and closure $\overline \Omega$. Then $V$  is, in an equivalent norm,
 the hermitian part of a unital real C*-algebra with positive cone $\overline \Omega$ if and only if
 $\Omega$ is a Finsler symmetric cone with an orienatable extension.
 \end{xthm}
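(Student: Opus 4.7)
The plan is to prove the two implications separately, with the reverse direction carrying the real weight.

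\textbf{Forward direction.} Given a unital real C*-algebra $A$, let $\Omega$ be the interior of the positive cone $A_+$ in the hermitian part $V=A_h$. I would verify the Finsler symmetric cone axioms by displaying three standard structures: (i) the group of invertible hermitian (or, more robustly, all invertible) elements acts transitively on $\Omega$ via $g\cdot a = gag^*$, realising $\Omega$ as a homogeneous Banach manifold; (ii) at each $a\in\Omega$ the map $s_a(b) = ab^{-1}a$ is a smooth involutive automorphism fixing $a$ with differential $-I$, providing the symmetry; (iii) the tangent bundle carries the natural Finsler norm $\|v\|_a = \|a^{-1/2}va^{-1/2}\|$ which is invariant under the quadratic representation and hence under translation to any point. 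For the orientable extension I would pass to the complexification $\mathcal{A}=A+iA$ and observe that the interior of $\mathcal{A}_+$ is a larger Finsler symmetric cone into which $\Omega$ embeds, the complex Jordan triple structure giving the required orientation.

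\textbf{Reverse direction.} The starting point is the Jordan-theoretic machinery developed by Koecher, Kaup, Upmeier and the author for symmetric Banach manifolds. Fixing a base point $e\in\Omega$ (guaranteed by homogeneity) and analysing the Lie algebra of the automorphism group in its $\pm 1$-eigenspace decomposition under the symmetry $s_e$, one extracts a commutative Jordan product on $V = T_e\Omega$. The Finsler symmetry axioms translate, in a by-now standard way via the Koecher--Kantor--Tits construction, into the identities that make $V$ a unital JB-algebra in an equivalent norm, with unit $e$ and positive cone $\overline\Omega$. This alone is not enough: a general JB-algebra may be exceptional (e.g.\ $H_3(\mathbb{O})$), or special but not the fixed-point set of any $*$-antiautomorphism of a complex C*-algebra.

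The orientable extension is what rules these out. I would interpret it as providing a complex Finsler symmetric cone $\widetilde\Omega\supset\overline\Omega$, equipped with an antiholomorphic involutive symmetry fixing $V$ pointwise, whose corresponding JB*-algebra is, by the existing complex version of the theorem, the self-adjoint part of a unital complex C*-algebra $\mathcal{A}$. The antiholomorphic involution then transfers to an involutive $*$-antiautomorphism $\varphi$ of $\mathcal{A}$ with $\mathcal{A}=V+iV$ and $V\cap iV=\{0\}$, so $A=\{a\in\mathcal{A}:\varphi(a)=a^*\}$ is the sought unital real C*-algebra with $V=A_h$ and $\overline\Omega=A_+$, where the C*-norm on $A_h$ is equivalent to the given norm on $V$ by the continuity of the JB-algebra structure.

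The main obstacle will be this last transfer: extracting from the purely geometric datum of an ``orientable extension'' a genuine $*$-antiautomorphism of a genuine complex C*-algebra, rather than merely a Jordan involution of a JB*-algebra. Here one must reprove, in the cone picture, the analogue of the Alfsen--Hanche-Olsen--Shultz orientation theorem of \cite{aos}, which for state spaces selects the associative C*-algebra structure among all JB-algebra structures; in the present setting the orientation lives on the manifold $\Omega$ and the challenge is to show that it integrates to an associative product on the complexification and that the induced antiautomorphism is isometric, so that $V$ is isomorphic, as a partially ordered Banach space, to the hermitian part of a real C*-algebra in an equivalent norm.
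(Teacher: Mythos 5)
Your overall architecture matches the paper's: forward direction via the JB-algebra structure of $A_h$ and the complexification $\mathcal{A}=A+iA$ with the commutator orientation; reverse direction via the Cartan decomposition ${\rm Kill}\,\widetilde\Omega=\frak k_e\oplus\frak p_e$ to get a unital JB-algebra, then the complex case (Theorem \ref{c}, resting on Upmeier's Theorem 5.19) to realise $\widetilde V$ as the hermitian part of a complex C*-algebra $\mathcal{A}=\widetilde V+i\widetilde V$ with product $xy=L_x(y)-iJ(x)(y)$. But your proposal stops exactly where the real work begins, and you explicitly flag that step as an unresolved ``obstacle'': you never show how the orientable-extension datum actually produces a real C*-algebra inside $\mathcal{A}$ with hermitian part $V$. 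The paper's mechanism is concrete and short: the compatibility identity $\vp(J(a)(b))=J(\vp(b))(\vp(a))$ in the definition of an orientable extension (note the \emph{reversal} of $a$ and $b$), together with the fact that the unital isometry $\vp$ is a Jordan automorphism, forces the complex-linear extension $\psi$ of $\vp$ to satisfy $\psi(xy)=\psi(y)\psi(x)$ for the C*-product above. From this one checks that $a_1a_2a_3a_4+a_4a_3a_2a_1$ is $\vp$-fixed for $a_i\in V$, i.e.\ $V$ is a \emph{reversible} JC-subalgebra of $\mathcal{A}_h$, and St\o rmer's theorem then says the closed real *-algebra $\mathcal{R}(V)$ generated by $V$ is a real C*-algebra with $\mathcal{R}(V)_h=V$. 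Your alternative (exhibit a *-antiautomorphism of $\mathcal{A}$ and take $A=\{a:\varphi(a)=a^*\}$) could also be made to work from the same identity, but you do not derive the antiautomorphism property from the definition, and your suggestion that one must ``reprove the AHS orientation theorem in the cone picture'' overestimates what remains: the associativity question is already settled in the complex step, and only the descent to the real form is left.

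Two further inaccuracies. First, you misread the notion of extension: in the paper $\widetilde V$ is a \emph{real} Banach space and $\vp$ a real-linear isometry with fixed-point set $V$; there is no complex structure on $\widetilde\Omega$ and no ``antiholomorphic'' involution, so the geometric picture you build the reverse direction on is not the one the hypothesis supplies. Second, you omit the final verification that $\overline\Omega$ is precisely the positive cone of $\mathcal{R}(V)$ rather than merely contained in it; the paper needs Lemma \ref{1} (separation by states in $S_e(V)$, each extending to $S_e(\widetilde V)$) for the reverse inclusion, and also Lemma \ref{eu} to relocate the base point of the orientation to some $e\in\Omega\subset\widetilde\Omega$ so that the identity of the resulting algebra lies in $V$. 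These are small but genuinely necessary steps that your outline does not account for.
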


 In other words,
 {\it  an open cone $\Omega$ in a real Banach space $V$ is the interior of the positive cone
 of a  unital real C*-algebra if and only if it is a Finsler symmetric cone
 with an orientable extension}.

The preceding assertion is proved in Theorem \ref{rc}. The special case of {\it complex C*-algebras} is proved in
Theorem \ref{c} which asserts, equivalently, that {\it an open cone  $\Omega$ in a real Banach space $V$ is
the interior of the positive
cone of a  unital complex C*-algebra if and only if it is a normal linearly homogeneous Finsler symmetric cone
with an orientation}.

We note that a characterisation of von Neumann algebras in terms of certain assoicated cones has been established by
Connes  in \cite{con}, where it is shown that
a closed cone in a (complex) Hilbert space is the closed cone associated to a cyclic and separating vector
of a von Neumann algebra if and only if it is self-dual, facially homogeneous and orientable.

It is interesting to observe from our results the intimate relationship between the C*-algebraic structures and
the Finsler geometric structures of cones. In fact, they determine each other.
How does one construct a C*-product in $V$, given a  Finsler symmetric
 structure of the cone? A C*-product
 $$ab = \frac{1}{2}(ab+ba)  + \frac{1}{2}(ab-ba)$$
is composed of two parts, namely, the Jordan product
$$a\circ b =  \frac{1}{2}(ab+ba)$$
and the Lie product
$$[a, b] = ab-ba.$$
A Finsler symmetric cone $\Omega$ in a real Banach space $V$
is a real symmetric Banach manifold modelled on $V$, which induces a Jordan structure
in $V$. Further, an orientation on $\Omega$, if it exists, then provides the Lie product.
Conversely, the hermitian part $A_h$ of a C*-algebra $A$ carries a natural Jordan algebraic structure,
which gives rise
to a Finsler symmetric structure of the cone, and the inner derivations of $A$ provide an orientation.
These remarks reveal the idea and gist of the proof of Theorem \ref{rc}.

In what follows, we discuss Finsler geometry of cones
and complete the details of the preceding discussion, in which
the notion of an orientation in Definition \ref{o} is adapted with simplification
from the one introduced by
Upmeier  \cite{up} for a holomorphic characterisation of
complex C*-algebras. This notion differs from that of an orientation for a self-dual cone
introduced in \cite[D\'efinition 4.11]{con}.

\section{Geometry of open cones}

Let $V$ be a real Banach space. The Banach algebra $L(V)$ of bounded linear operators on $V$
forms a real Banach Lie algebra in the commutator product
$$[\vp, \psi] = \vp\psi- \psi\vp  \qquad (\vp,\psi \in L(V)).$$
Let $GL(V)$ be the open subgroup of $L(V)$ consisting of invertible elements in $L(V)$.
It is a real Banach Lie group with Lie algebra $L(V)$.

Let $\Omega$ be an open cone in a real Banach space $V$, where a {\it cone}  is defined to be
a non-empty set $\Omega$ satisfying $\Omega + \Omega \subset \Omega$ and
$\alpha \Omega \subset \Omega$ for all $\alpha >0$. Then $\Omega$ is a real  Banach manifold
modelled on $V$. We refer to \cite{book, chu1} for a brief introduction to Banach manifolds
and Banach Lie groups.

The linear maps $g\in GL(V)$ satisfying
$g(\Omega) = \Omega$
 form a subgroup of $GL(V)$ and will be denoted by
 \begin{equation}\label{G}
G(\Omega)=\{g\in GL(V): g(\Omega) = \Omega\}.
\end{equation}
We shall call $G(\Omega)$ the {\it linear automorphism group} of $\Omega$.
An element $g\in GL(V)$ belongs to
$G(\Omega)$ if and only if $g(\overline\Omega) = \overline\Omega$, the latter denotes the closure of $\Omega$.
 Hence $G(\Omega)$ is a closed subgroup of
 $GL(V)$ and can be topologised  to
 a real Banach Lie group with Lie algebra
 \begin{equation}\label{go}
\frak g(\Omega) =\{ X\in L(V): \exp t X \in G(\Omega)~  \forall t \in \mathbb{R}\}
\end{equation}
(cf.\,\cite[p.\,387]{up}). For each $e\in \Omega$, we define
\begin{equation}\label{e}
\frak g (\Omega)_e := \{X\in \frak g (\Omega): X(e) =0\} =
\{X\in \frak g (\Omega): (\exp t X )(e) = e~  \forall t \in \mathbb{R}\}
\end{equation}
which is a closed Lie subalgebra of $\frak g(\Omega)$, and will be called the
{\it derivation algebra induced by} $e$.

\begin{definition}\rm  An open cone $\Omega$ in a real Banach space
is called {\it linearly homogeneous} if the linear automorphism
group $G(\Omega)$ acts transitively on $\Omega$,
that is, given $a,b\in \Omega$, there is a continuous linear isomorphism $g\in G(\Omega)$ such that
$g(a)=b$.
\end{definition}

Let $\Omega$ be an open cone in a real Banach space $V$ with norm $\|\cdot\|$, and let $\leq$ be
the partial order defined by the closure $\overline \Omega$, which is a cone,  so that
$$x\leq y \Leftrightarrow y-x \in \overline\Omega.$$
 In this order, each element $e\in \Omega$ is an {\it order-unit}, that is, for each $v\in V$,  we have
$$-\lambda e \leq v \leq \lambda e$$
for some $\lambda >0$ (cf.\,\cite[p.152]{chu2}).  This implies
\begin{equation*}\label{gen}
V= \Omega -\Omega.
\end{equation*}
 An  order-unit $e\in \Omega$ induces a semi-norm $\|\cdot\|_e$ on $V$,
 defined by
 \[ \|x\|_e = \inf\{\lambda>0: -\lambda e \leq x \leq \lambda e\} \qquad (x\in V)\]
which satisfies
\begin{equation}\label{eq}
-\|x\|_e e\leq x \leq \|x\|_e e.
\end{equation}
Since $\{x\in V: \|x\|_e =0\} = \overline \Omega \cap -\overline\Omega$, the semi-norm $\|\cdot\|_e$ is a norm
if and only if  $$ \overline \Omega \cap -\overline\Omega=\{0\}$$
 in which case, $\Omega$ is called a {\it proper cone}
and $\|\cdot\|_e$ is
called the {\it order-unit norm} induced by $e$.
All order-unit norms induced by elements in $\Omega$ are mutually equivalent.

A linear functional $f: V \longrightarrow \mathbb{R}$  is called {\it positive} if $f(\overline \Omega)
\subset [0, \infty)$. Let $e\in \Omega$.
Denote the {\it state space} of $V$ (with respect to $e$) by
\begin{eqnarray*}\label{s}
S_e(V)&=& \{f\in V^*: f(e)= 1, f \mbox{ is positive} \}\\
&=&  \{f\in (V, \|\cdot\|_e)^*: f(e)= 1=\|f\|_e\}
\end{eqnarray*}
where $\|f\|_e$ denotes the norm of  $f$ with respect to the order-unit norm $\|\cdot\|_e$ of
$V$.
We will make use of the following lemma proved in \cite{chu2}.

\begin{lemma}\label{1}
Let $\Omega$ be a proper open cone in a real Banach space $V$ and let $e\in \Omega$,
which induces an order-unit norm $\|\cdot\|_e$ on $V$.
Then we have
$$\Omega = \bigcap_{f\in S_e(V)} f^{-1}(0, \infty).$$
\end{lemma}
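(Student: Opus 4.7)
The plan is to prove the two inclusions of the claimed equality, using openness of $\Omega$ together with the Hahn--Banach separation theorem, with the order-unit property of $e$ entering crucially to normalise the separating functional. For the inclusion $\Omega \subseteq \bigcap_{f \in S_e(V)} f^{-1}(0, \infty)$, I would fix $x \in \Omega$ and exploit openness: for all sufficiently small $t > 0$ we have $x - te \in \Omega \subseteq \overline{\Omega}$, equivalently $te \leq x$. For any $f \in S_e(V)$, positivity then yields $f(x) \geq tf(e) = t > 0$.

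For the reverse inclusion I would argue the contrapositive. Given $x \in V \setminus \Omega$, the open convex cone $\Omega$ and the singleton $\{x\}$ are disjoint convex sets, so the geometric Hahn--Banach theorem supplies a nonzero continuous linear functional $g \in V^*$ and $\alpha \in \mathbb{R}$ with $g(x) \leq \alpha \leq g(y)$ for every $y \in \Omega$. Scaling $y$ by $t > 0$ and letting $t \to \infty$ and $t \to 0^+$ shows that $g \geq 0$ on $\Omega$, hence on $\overline{\Omega}$, and that $\alpha \leq 0$; so $g$ is a nonzero positive functional with $g(x) \leq 0$.

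The main technical point, and the step I expect to be the real obstacle, is to normalise $g$ so that it actually lies in $S_e(V)$. Since $e$ is an order-unit, each $y \in V$ satisfies $\pm y \leq \lambda_y e$ for some $\lambda_y > 0$; applying the positive functional $g$ gives $|g(y)| \leq \lambda_y g(e)$. If $g(e)$ were zero this would force $g \equiv 0$, contradicting the separation, so $g(e) > 0$ and $f := g/g(e) \in S_e(V)$ satisfies $f(x) \leq 0$. This exhibits $x \notin \bigcap_{f \in S_e(V)} f^{-1}(0, \infty)$, completing the proof. Without the order-unit property of $e$ one could not rule out $g(e) = 0$ and the separating functional would fail to be a state, which is precisely why this hypothesis is essential to the lemma.
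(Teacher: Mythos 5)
Your argument is correct: the forward inclusion via $x - te \in \Omega$ for small $t>0$, the separation of the open convex cone $\Omega$ from a point $x \notin \Omega$, the scaling argument showing the separating functional is positive with $g(x) \le 0$, and the use of the order-unit property to rule out $g(e)=0$ are all sound. The paper itself gives no proof of this lemma (it is quoted from the reference [chu2]), but what you have written is the standard Hahn--Banach separation argument that one would expect there, so there is nothing to flag.
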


Given a proper open cone $\Omega$ in a real Banach space $V$,
we consider, for later application,  the question of completeness of the order-unit norm $\|\cdot\|_e$
induced by a point $e \in \Omega$. The answer depends on certain order property of the cone.

\begin{definition}\rm An open cone $\Omega$  in a real Banach space $V$ is
called {\it normal} if there is a constant $\gamma >0$ such that
$$0\leq x \leq y \Rightarrow \|x\|\leq \gamma\|y\| \qquad (x,y \in V)$$
where $\leq$ is the partial order defined by the closure $\overline\Omega$.
\end{definition}

The following lemma shows that a normal open cone is proper.

\begin{lemma}\label{norm} Let $\Omega$ be an open cone in a real Banach space $V$ with norm $\|\cdot\|$.
Let $\|\cdot\|_e$ be the semi-norm induced by $e\in \Omega$. The following conditions are equivalent.
\begin{enumerate}
\item[(i)] $\|\cdot\|_e$ is a complete norm.
\item[(ii)] $\Omega$ is normal.
\item[(iii)] $\|\cdot\|_e$ is equivalent to $\|\cdot\|$.
\end{enumerate}
\end{lemma}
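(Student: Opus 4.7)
I would organise the argument around the cycle (iii) $\Rightarrow$ (i), (iii) $\Leftrightarrow$ (ii), and (i) $\Rightarrow$ (iii), after first recording one preliminary bound that holds purely from openness of $\Omega$.

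The preliminary step uses that, since $\Omega$ is open, there is $r > 0$ with the open ball $B(e, r) \subset \Omega$. For $x \in V$ with $\|x\| < r$, both $e \pm x \in \Omega \subseteq \overline{\Omega}$, so $-e \le x \le e$ and hence $\|x\|_e \le 1$. Homogeneity gives $\|x\|_e \le r^{-1}\|x\|$ for every $x \in V$. Consequently the identity map $I:(V, \|\cdot\|) \to (V, \|\cdot\|_e)$ is always continuous, and equivalence of the two (semi-)norms reduces to bounding $\|\cdot\|$ above by a constant multiple of $\|\cdot\|_e$. This already yields (iii) $\Rightarrow$ (i), since equivalence transports completeness of $\|\cdot\|$ to $\|\cdot\|_e$.

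For (iii) $\Rightarrow$ (ii), the key observation is that $0 \le x \le y$ together with any $\lambda > \|y\|_e$ yields $0 \le x \le y \le \lambda e$, hence $\|x\|_e \le \|y\|_e$; composing with the assumed equivalence yields a normality constant. For the converse (ii) $\Rightarrow$ (iii), I would start from (\ref{eq}), which rewrites as $0 \le x + \|x\|_e e \le 2\|x\|_e e$, so normality with constant $\gamma$ gives $\|x + \|x\|_e e\| \le 2\gamma \|x\|_e \|e\|$, and the triangle inequality produces $\|x\| \le (2\gamma + 1)\|e\|\,\|x\|_e$, which together with the preliminary bound delivers (iii). A by-product is that normality forces the semi-norm $\|\cdot\|_e$ to be an actual norm: $\|x\|_e = 0$ would imply $-\lambda e \le x \le \lambda e$ for every $\lambda > 0$, hence by the same estimate $\|x\| \le (2\gamma + 1)\|e\|\lambda \to 0$, so $x = 0$; this also justifies the remark preceding the statement that normal open cones are proper.

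The only non-routine step is (i) $\Rightarrow$ (iii), and I expect this to be the main obstacle, though only in a mild sense: once one knows that both $(V, \|\cdot\|)$ and $(V, \|\cdot\|_e)$ are Banach spaces and that the identity map is a continuous linear bijection between them, the Banach open mapping theorem delivers a continuous inverse, whence the two norms are equivalent. The subtlety throughout is keeping track of when $\|\cdot\|_e$ is actually a norm rather than a semi-norm, which is built into hypotheses (i) and (iii) and verified for (ii) as above.
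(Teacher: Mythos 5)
Your argument is correct, and for the equivalence $(i)\Leftrightarrow(iii)$ it coincides with the paper's: the same preliminary estimate $\|x\|_e\leq r^{-1}\|x\|$ from openness of $\Omega$ at $e$, followed by the open mapping theorem applied to the identity map between the two Banach spaces. Where you diverge is in $(ii)\Leftrightarrow(iii)$: the paper simply cites \cite[Lemma 3.3]{chu2} and asserts that the properness hypothesis appearing there can be removed ``on close examination,'' whereas you give a short self-contained proof --- monotonicity of $\|\cdot\|_e$ on order intervals for $(iii)\Rightarrow(ii)$, and the estimate $\|x\|\leq(2\gamma+1)\|e\|\,\|x\|_e$ derived from $0\leq x+\|x\|_e e\leq 2\|x\|_e e$ for $(ii)\Rightarrow(iii)$. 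Your version buys two things: it makes the lemma independent of the external reference, and it explicitly substantiates the paper's unproved remark that normality forces $\|\cdot\|_e$ to be a genuine norm (equivalently, that normal open cones are proper), via $\|x\|_e=0\Rightarrow\|x\|=0$. One small point worth making explicit if you write this up: the inequality $-\|x\|_e e\leq x\leq\|x\|_e e$ in (\ref{eq}) uses that the infimum defining $\|x\|_e$ is attained, which follows from closedness of $\overline\Omega$; since the paper records (\ref{eq}) beforehand you are entitled to it, but the attainment is the reason it holds.
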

\begin{proof} By \cite[Lemma 3.3]{chu2},   $(ii)$  is equivalent to $(iii)$ under the assumption that
$\Omega$ is proper. A close examination of the proof  in  \cite[Lemma 3.3]{chu2}
reveals that this assumption can be removed.

Since $(V, \|\cdot\|)$ is a Banach space, the equivalence of $(i)$ and $(iii)$ follows from the open
mapping theorem and the fact that $\|\cdot\|_e \leq \alpha \|\cdot\|$ for some $\alpha >0$.
To see the latter, observe that there is an open neighbourhood of $e$ contained in the open set $\Omega$.
Hence we can pick $r>0$ so that $\Omega$ contains the open ball $e-B(0,r)$,
 where $B(0,r) = \{x\in V: \|x\| < r\}$. Let $v\in V\backslash \{0\}$.
Then we have $\pm (r/2\|v\|)v \in B(0,r)$
which implies $$ e \mp (r/2\|v\|)v \in e -B(0,r) \subset \Omega,$$ that is,
$$- \frac{2\|v\|}{r} e \leq v \leq \frac{2\|v\|}{r} e$$
proving
$\|v\|_e \leq (2/r)\|v\|$ for all $v \in V$.
\end{proof}

Order structures play an important role in C*-algebras. The interior $\Omega$ of the positive cone
$$\{a\in A:  0\leq a\}$$ of a unital C*-algebra $A$ is a normal open cone containing the identity $e$,
and the order-unit norm $\|\cdot\|_e$
coincides with the C*-norm on the hermitian part
$$A_h =\{a\in A: a^*=a\}$$ of $A$. If $A$ is a real C*-algebra, then its
open cone $\Omega$ has a natural {\it extension} to the open cone $\widetilde \Omega$
of a complex C*-algebra, in the following sense.

\begin{definition}\label{ext}\rm
Given   open cones $\Omega$ and $\widetilde\Omega$ in real Banach spaces $V$ and  $\widetilde V$ respectively,
we say that $(\widetilde V, \widetilde\Omega)$, or simply $\widetilde \Omega$, is an {\it extension} of $\Omega$
if there is a linear isometry $\vp : \widetilde V \longrightarrow \widetilde V$ such that
$$V=\{v\in \widetilde V: \vp(v)=v\}$$
and $\Omega = \widetilde\Omega \cap V$.
\end{definition}

\begin{exam}\label{ex} \rm Let $\Omega$ be the interior of the positive cone of a real C*-algebra $A$ with
identity $e$ and let
$$A= \{ a\in \mathcal{A}: \vp(a) = a^*\}$$
where $\mathcal{A}$ is a complex C*-algebra and $\vp:\mathcal{A}\longrightarrow \mathcal{A}$
is an involutary $*$-antiautomorphism.  Then  $e\in \Omega$ is an order-unit of the hermitian part $A_h$ of $A$
and the order-unit norm  $\|\cdot\|_e$ is the C*-norm on $A_h$.

Let $\widetilde\Omega $ be the interior of the positive cone of $\mathcal{A}$.
Then $(\mathcal{A}_h, \widetilde \Omega)$ is an extension of $\Omega$.
Indeed,  $A_h$ is contained in the
hermitian part $\mathcal{A}_h$ of $\mathcal{A}$ and $\vp$ restricts to a  (real) linear isometry of $\mathcal{A}_h$
such that
$$A_h = \{ a\in \mathcal{A}_h: \vp(a)=a\} \quad {\rm and} \quad \Omega = \widetilde \Omega \cap A_h.$$
The last equality above follows from Lemma \ref{1} since every $f$ in the state space $S_e(\mathcal{A}_h)$
of $\mathcal{A}_h$
restricts to an element in $S_e(A_h)$, and each $f\in S_e(A_h)$ has a norm-preserving extension
$\widetilde f$ to $\mathcal{A}_h$ and  $\widetilde f \in S_e(\mathcal{A}_h)$.
\end{exam}

We now introduce Finsler structures of open cones.
Let $M$ be a Banach manifold (with an analytic structure) modelled on a Banach space $(V, \|\cdot\|_{_V})$,
with  tangent bundle
$$TM = \{(p,v): p\in M, v \in T_p M\}.$$
 A  mapping
$$\nu : TM \longrightarrow [0,\infty)$$ is called a {\it tangent norm} \index{tangent norm}
if $\nu(p, \cdot)$ is a norm on the tangent space $T_p M \approx V$ for each $p\in M$. We call $\nu$
a {\it compatible tangent norm} if it satisfies
the following two conditions.
\begin{enumerate}
\item[(i)] $\nu$ is continuous.
\item[(ii)] For each $p\in M$, there is a local chart $\varphi: \mathcal{U} \rightarrow V$ at $p$, and
constants $0<r<R$ such that
$$ r\|d\varphi(a)(v)\|_{_V} \leq \nu(a,v) \leq  R\|d\varphi(a)(v)\|_{_V} \qquad (a\in \mathcal{U}\
\subset M, v\in T_a M).$$
\end{enumerate}
Following \cite{neeb},  we also call a Banach manifold with a compatible tangent norm
a {\it Finsler manifold}. Finsler manifolds are a generalisation of Riemannian manifolds.

Given a Banach manifold $M$ with a compatible tangent norm $\nu$,
a bianalytic map $f:M \longrightarrow M$ is called  a {\it $\nu$-isometry}  if it satisfies
\begin{equation*}\label{nuisom}
\nu(f(p), df(p)(\cdot)) = \nu(p, \cdot) \quad {\rm for~all} \quad (p, \cdot) \in TM.
\end{equation*}

\begin{definition}\label{2.2}
Let $\Omega$ be an open cone in a real Banach space $V$, equipped with a tangent  norm $\nu$.
We say that $\nu$ is {\it $G(\Omega)$-invariant} if each $g \in G(\Omega)$ is a $\nu$-isometry.
\end{definition}

To introduce the  concept of a Finsler symmetric cone, we begin
with the notion of a {\it symmetry} of a manifold. Let $M$ be a Banach manifold with a
compatible tangent norm $\nu$ and let $p\in M$. A {\it $\nu$-symmetry}
(or {\it symmetry}, if $\nu$ is understood) at $p$ is a $\nu$-isometry
$$s: M \longrightarrow M$$ satisfying the following two conditions:
\begin{enumerate}
\item [(i)] $s$ is involutive, that is, $s\circ s$ is the identity map on $M$,
\item [(ii)] $p$ is an isolated fixed-point of $s$, in other words,
$p$ is the only point in some neighbourhood of $p$ satisfying $s(p)=p$.
\end{enumerate}

By a {\it symmetric Finsler  manifold}, or a {\it symmetric Banach manifold} (cf.\,\cite{up1}),
we mean a {\it connected} Banach manifold $M$, equipped with a compatible tangent norm $\nu$, such that
there is a unique $\nu$-symmetry $s_p: M \longrightarrow M$ at each $p\in M$.

Symmetric Finsler manifolds generalise finite dimensional Riemannian symmetric spaces.

\begin{definition}\rm A {\it Finsler symmetric cone} is an open cone $\Omega$ in a {\it real} Banach space $V$,
 which is a symmetric Finsler manifold equipped with a
$G(\Omega)$-invariant compatible tangent norm $\nu$.
\end{definition}

Let $\Omega$ be a Finsler symmetric cone in a real Banach space $V$ and let
$${\rm Aut}\,\Omega = \{f\in {\rm Diff}(\Omega): f\circ s_p = s_{f(p)}\circ f~ \forall p\in \Omega\}$$
be the automorphism group of $\Omega$,
which is a subgroup of the diffeomorphism group Diff$(\Omega)$ of $\Omega$.
By \cite[Theorem 2.4, Theorem 5.12]{klotz} (see also \cite[Appendix]{chu2}),
Aut\,$\Omega$ carries the structure of a real Banach Lie group, with Lie algebra
\begin{equation}\label{kill}
{\rm Kill}\,\Omega =\{X\in\mathcal{V}(\Omega): \exp tX \in {\rm Aut}\,\Omega~ \forall t \in \mathbb{R}\}
\end{equation}
which is a Banach Lie algebra in some norm and a subalgebra of the
Lie algebra $\mathcal{V}(\Omega)$ of smooth vector fields on $\Omega$. A vector field $X\in \mathcal{V}(\Omega)$
has a (local) representation
$$X =  f\frac{\partial}{\partial x}$$
where $f: \Omega \longrightarrow V$ is a smooth function. We call $X$ a {\it linear vector field}
if $f$ is (the restriction of) a continuous linear map on $V$ in which case, we identify $X$ as an element in
$L(V)$. On the other hand, each element $X\in L(V)$ can be identified naturally as a linear vector
field on $\Omega$.

Pick $e\in \Omega$.
Then the adjoint representation
$$\theta = Ad(s_e): {\rm Kill}\,\Omega \longrightarrow {\rm Kill}\,\Omega$$
of the symmetry $s_e \in {\rm Aut}\, \Omega$
is an involution and the Lie algebra ${\rm Kill}\,\Omega$ has a Cartan decomposition
\begin{equation}\label{kp}
{\rm Kill}\,\Omega = \frak k_e \oplus \frak p_e
\end{equation}
satisfying
\begin{equation}\label{kp1}
\frak k_e =\{X\in {\rm Kill}\, \Omega: X(e)=0\}, \quad [\frak p_e, \frak p_e]\subset \frak k_e
\end{equation}
where $\frak k_e$ is the $1$-eigenspace and $\frak p_e$ the $(-1)$-eigenspace of $\theta$
(cf.\,\cite[p.158]{chu2}).
Hence the continuous and linear evaluation  map
\begin{equation}\label{map}
X\in \frak p_e \mapsto X(e) \in V
\end{equation}
is bijective as $\frak k_e \cap \frak p_e=\{0\}$. We denote its inverse by
\begin{equation}\label{L}
L: a\in V \mapsto  L(a)= \ell_a\frac{\partial}{\partial x} \in \frak p_e
\end{equation}
so that $L(a)(e)=a=\ell_a(e)$.

 Since  $[L(a), L(b)]\in \frak k_e$ for $a, b\in V$ by (\ref{kp1}), we have $[L(a), L(b)](e) =0$, that is,
$$0= d\ell_b(e)(\ell_a(e)) - d\ell_a(e)(\ell_b(e)) =  d\ell_b(e)(a) - d\ell_a(e)(b).$$
It follows that  the  multiplication in $V$ defined by
\begin{equation}\label{prod}
a\circ b:= d\ell_a(e)(b) \qquad \left(a, b\in V, L(a) = \ell_a\frac{\partial}{\partial x}\right)
\end{equation}
is commutative. Since $L$ is linear, the product $a \circ b$  is bilinear and  turns $V$ into a
commutative (but not necessarily associative) algebra
in which case, the derivative $d\ell_a(e) : V \longrightarrow V$ is the (left) multiplication operator
$$
L_a : x\in V \mapsto a\circ x \in V
$$
on the algebra $(V, \circ)$.
We will always assume this algebraic structure of $V$ and retain the above notations
in the in the sequel.

If the Finsler symmetric cone $\Omega$ is linearly homogeneous, then by \cite[p.15]{chu2},
 $L(a)$ is a linear vector field on $\Omega$ and identified as an element in
 $L(V)$.
 In this case, $L(a)$ is the multiplication operator $L_a$ on $V$ since (\ref{prod}) can be
 expressed as
\begin{equation}\label{prod1}
a\circ b =L(a)(b) \qquad (a,b \in V).
\end{equation}

\section{Jordan and ordered structures in C*-algebras}

A {\it Jordan algebra} ${A}$ is a vector space  equipped with a  bilinear product
$$(a,b) \in A \times {A} \mapsto a\circ b \in {A}$$
which is  commutative
and satisfies the {\it Jordan identity}
$$a\circ(b\circ a^2) = (a\circ b)\circ a^2 \qquad (a,b \in {A}).$$
We do not assume associativity of the product although $A$ can be infinite dimensional.
We call $A$ {\it unital} if it has an identity.

A real Jordan algebra $(A, \circ)$ is called a {\it JB-algebra}
 if it is also a Banach space and the norm
satisfies
$$ \|a\circ  b\| \leq \|a\| \|b\|, \quad \|a^2\| =  \|a\|^2, \quad
\|a^2\| \leq \|a^2 + b^2\|$$ for all $a, b \in  A$.
A JB-algebra ${A}$ admits a natural order structure determined by the closed cone
$${A}_+ =\{x^2: x\in {A}\}$$
\cite[Lemma 3.3.5, Lemma 3.3.7]{stormer}. If a JB-algebra $A$ contains an identity $e$,
then $e$ belongs to the interior of $A_+$ and the norm of $A$ coincides with the order-unit
norm $\|\cdot\|_e$ \cite[Proposition 3.3.10]{stormer}.

A C*-algebra $A$ is a Jordan algebra in the Jordan product
$$a \circ b = \frac{1}{2}(ab +ba).$$
The relevance of JB-algebras lies in the fact that the hermitian part
\begin{equation}\label{h}
A_h =\{a\in A: a^* =a\},
\end{equation}
which is real Jordan subalgebra of $A$, forms a JB-algebra and $\{x^2: x\in A_h\}$ is the positive
cone $\{a\in A: 0\leq a \}$ of $A$.

Indeed, we will see that the C*-structure of $A$ is determined by its JB-algebraic structure
together with an orientation of the positive cone.
A crucial fact  is the following theorem.

\begin{thm}\label{fin} Let $V$ be a real Banach space with an open cone $\Omega$.
The following conditions are equivalent.
\begin{enumerate}
\item[(i)] $V$ is a JB-algebra in an equivalent  norm, with an identity $e\in \Omega$ and
$\Omega$ is the interior of $ \{x^2: x \in V\}$.
\item[(ii)] $\Omega$ is a normal linearly homogeneous Finsler symmetric cone.
\end{enumerate}
\end{thm}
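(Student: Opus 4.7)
My plan is to prove the two implications separately, with the substantive work lying in $(ii)\Rightarrow(i)$, where one must extract a Jordan algebraic structure from the Finsler symmetric geometry and verify the JB-axioms in the order-unit norm.

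For $(i)\Rightarrow(ii)$, I would invoke standard JB-algebra machinery. Since $e\in\Omega$, the given norm is equivalent to the order-unit norm $\|\cdot\|_e$, and normality of $\Omega$ follows from Lemma \ref{norm}. Linear homogeneity comes from the quadratic representations $U_a=2L_a^2-L_{a^2}$: for every $a$ invertible in $(V,\circ)$, the map $U_a$ lies in $G(\Omega)$ and sends $e$ to $a^2$, while each point of $\Omega$ has an invertible square root. The Jordan inverse $j\colon x\mapsto x^{-1}$ on $\Omega$ is an involutive bianalytic map with $e$ as isolated fixed point, and the conjugates $s_{a^2}=U_a\,j\,U_a^{-1}$ supply symmetries at the remaining points. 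A $G(\Omega)$-invariant compatible tangent norm is then defined by transporting $\|\cdot\|_e$ on $T_e\Omega\cong V$ via the transitive action, its well-definedness following from the fact that the isotropy subgroup of $e$ acts isometrically on $(V,\|\cdot\|_e)$.

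For $(ii)\Rightarrow(i)$, I would begin with the commutative bilinear product $a\circ b=L(a)(b)$ furnished by linear homogeneity and (\ref{prod1}), together with the Cartan decomposition (\ref{kp})--(\ref{kp1}). The first key step is to establish the Jordan identity. For $X\in\frak k_e$ and $a\in V$ the bracket $[X,L(a)]$ belongs to $\frak p_e$ by $[\frak k_e,\frak p_e]\subset\frak p_e$, and evaluating at $e$ gives $[X,L(a)](e)=X(L(a)(e))-L(a)(X(e))=X(a)$. The injectivity of the evaluation map (\ref{map}) then forces $[X,L(a)]=L(X(a))$, from which a one-line computation shows
\[
X(a\circ b)=[X,L(a)](b)+L(a)(X(b))=X(a)\circ b+a\circ X(b),
\]
so each element of $\frak k_e$ acts as a derivation of $(V,\circ)$. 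Applying this to $X=[L(a),L(b)]\in\frak k_e$ and linearising, I obtain the relation $[L_a,L_{a^2}]=0$, which is well known to be equivalent to the Jordan identity. The remaining task is to verify the JB-norm axioms in the order-unit norm $\|\cdot\|_e$, which by Lemma \ref{norm} is complete and equivalent to the original norm. Submultiplicativity $\|a\circ b\|_e\le\|a\|_e\|b\|_e$ and the identity $\|a^2\|_e=\|a\|_e^2$ I would derive from $G(\Omega)$-invariance of $\nu$ together with the fact that $L_a\in\frak g(\Omega)$ for $a\in\Omega$, giving operator-norm bounds on $L_a$ relative to $\|\cdot\|_e$. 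The axiom $\|a^2\|_e\le\|a^2+b^2\|_e$ reduces to the identification $\overline\Omega=\{x^2:x\in V\}$, which I would establish using Lemma \ref{1}: each $f\in S_e(V)$ satisfies $f(x^2)\ge0$ by positivity of the one-parameter semigroup $\exp tL_x$ acting on $\overline\Omega$, so squares lie in $\overline\Omega$; the reverse inclusion follows from the Jordan spectral theorem, available once $(V,\circ,\|\cdot\|_e)$ is recognised as a unital Jordan--Banach algebra.

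The main obstacle is the derivation-to-Jordan-identity step: translating the purely Lie-geometric relation $[\frak p_e,\frak p_e]\subset\frak k_e$ into the algebraic Jordan identity via the derivation property of $\frak k_e$, using essentially the injectivity of the evaluation map (\ref{map}) and the linearity of $L$. A secondary difficulty is the identification $\overline\Omega=\{x^2:x\in V\}$, which genuinely requires a state-space argument through Lemma \ref{1} combined with spectral calculus in $(V,\circ)$, and it is here that completeness of $\|\cdot\|_e$, hence the normality hypothesis, is used in an essential way.
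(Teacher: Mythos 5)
Your overall architecture for $(ii)\Rightarrow(i)$ --- Cartan decomposition, the commutative product $a\circ b=L(a)(b)$, the derivation property of $\frak k_e$, then the JB axioms in $\|\cdot\|_e$ --- is the right shape, and your $(i)\Rightarrow(ii)$ direction (quadratic representations for homogeneity, the Jordan inverse as the symmetry at $e$, transport of $\|\cdot\|_e$ for the tangent norm) matches what the paper does. Note, though, that the paper does not reprove this theorem: its ``proof'' is a citation to \cite[Theorem 4.2]{chu2} plus the observation that Lemma \ref{norm} lets one drop the properness hypothesis, so you are reconstructing the argument of \cite{chu2} rather than paralleling anything in this paper. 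The reconstruction has a genuine gap at exactly the point you flag as the main obstacle. The derivation property of $X=[L(a),L(b)]\in\frak k_e$ gives precisely the operator identity $[[L_a,L_b],L_c]=L_{[L_a,L_b]c}$ (the ``Lie triple'' identity). You assert that ``linearising'' this yields $[L_a,L_{a^2}]=0$, but no linearisation does: substituting $c=a$ and evaluating only produces the single relation $L_{a^3}=3L_aL_{a^2}-2L_a^3$, which is one of the two standard power relations of a Jordan algebra and does not by itself give $[L_a,L_{a^2}]=0$. Unital commutative algebras satisfying the Lie triple identity form a strictly larger variety than Jordan algebras; in the classical finite-dimensional theory the Jordan identity is extracted only with the help of an associative trace form coming from self-duality, and in the Banach setting of \cite{chu2} it is obtained by passing to the symmetric tube domain $V\oplus i\Omega$ and the associated JB*-triple structure (Kaup's theory), not by an elementary manipulation inside $\frak g(\Omega)$. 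Without some such additional input your proof of the Jordan identity does not close.

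A second, smaller gap: to show $\{x^2:x\in V\}\subset\overline\Omega$ you argue that $f(x^2)\ge 0$ for every state $f$ ``by positivity of the one-parameter semigroup $\exp tL_x$.'' The function $h(t)=f(\exp(tL_x)(e))$ satisfies $h\ge 0$, $h(0)=1$, $h'(0)=f(x)$ and $h''(0)=f(x^2)$, but a nonnegative function with $h(0)>0$ can perfectly well have $h''(0)<0$ (e.g.\ $h(t)=(1-t^2)^2$), so positivity of $h$ does not deliver $f(x^2)\ge 0$. The paper's source instead identifies $\Omega=\{\exp X(e):X\in\frak p_e\}$ via geodesic completeness and then matches this exponential image with the interior of the squares; some argument of that kind (or a genuine convexity/log-convexity property of $h$) is needed here. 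Relatedly, your derivation of $\|a\circ b\|_e\le\|a\|_e\|b\|_e$ from ``operator-norm bounds on $L_a$'' should be treated with care: $L_a$ is not a positive operator for $a\in\overline\Omega$ even in a bona fide JB-algebra, so the bound $\|L_a\|\le\|a\|_e$ does not follow from order-preservation and needs its own proof.
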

\begin{proof} This result has been proved in \cite[Theorem 4.2]{chu2} under the assumption
that the given cone $\Omega$ is proper. This assumption can be removed in view of
Lemma \ref{norm}. For completeness,
we sketch the essence of the proof but suppress the details.

If $V$ is a JB-algebra in an equivalent norm, with identity $e\in \Omega$ and $\Omega$
is the interior of $\{x^2: x\in V\}$,
then the norm of the JB-algebra $V$ is the order-unit norm $\|\cdot\|_e$, and
by Lemma \ref{norm},  $\Omega$
is a proper normal cone.
Following \cite[Theorem 4.2 (iii) $\Rightarrow$ (ii)]{chu2}, one can show that
 $\Omega$ is a linearly homogeneous Finsler
symmetric cone in the tangent norm $\nu(p, v) = \|v\|_p$ for $(p,v) \in \Omega \times V$,
where $\|\cdot\|_p$ is the order-unit norm induced by $p\in \Omega$.

Conversely, if $\Omega$ is a normal linearly homogeneous Finsler symmetric cone,
and
$${\rm Kill}\,\Omega = \frak k_e \oplus \frak p_e$$
is the Cartan decomposition induced by $e \in \Omega$  in (\ref{kp}), with the continuous bijective linear
map $L: V \longrightarrow \frak p_e$ in (\ref{L}), then $V$ is a JB-algebra in the order-unit norm
$\|\cdot\|_e$, with  identity $e$ and the product $a \circ b = L(a)(b)$
defined in (\ref{prod1}), where $\|\cdot\|_e$ is equivalent to the original norm of $V$
by Lemma \ref{norm}. Further, $\Omega$ is geodesically complete and it follows that
$\Omega =\{\exp X(e): X\in \frak p_e\}$, which can be shown to coincide with the interior of $\{x^2: x\in V\}$.
\end{proof}

Closed subalgebras of the JB-algebra $A_h$ in (\ref{h}) for some C*-algebra $A$ are called {\it JC-algebras}.
Not all JB-algebras are of this form. A JC-algebra $B \subset A_h$ is called {\it reversible} if
$$a_1a_2\cdots a_n + a_n\cdots a_2a_1 \in B$$
whenever $a_1, a_2, \ldots, a_n \in B$.  By \cite{cohn}, this condition is equivalent to the condition that
$$a_1a_2 a_3a_4 + a_4a_3a_2a_1 \in B$$
for $a_1, a_2, a_3,a_4 \in B$.

\section{Characterisation of real and complex C*-algebras}

We are now ready to characterise C*-algebras in terms of the geometric structure of their cones.

Let $\Omega$ be an open cone in a real Banach space $V$. Since $V= \Omega -\Omega$,
a positive homogeneous additive map $J: \Omega \longrightarrow W$ to a real vector space $W$
can be extended naturally to
a linear map $J_V : V \longrightarrow W$  by defining
\begin{equation}\label{ext1}
J_V(\omega_1 - \omega_2) = J(\omega_1) - J(\omega_2) \qquad (\omega_1, \omega_2 \in \Omega).
\end{equation}
Indeed, $J_V$ is well-defined since $\omega_1 - \omega_2 = \omega'_1 - \omega'_2$ implies
$\omega_1 + \omega'_2 =\omega'_1 + \omega_2$ and $J(\omega_1) + J(\omega'_2)
=J(\omega'_1) + J(\omega_2)$.

\begin{definition}\label{o}\rm
Let $\Omega$ be a Finsler symmetric cone in a real Banach space $V$. An {\it orientation}
of $\Omega$ is a continuous positive homogeneous additive map
$$J: \Omega \longrightarrow \frak g(\Omega)_e$$ satisfying
\begin{equation}\label{ori2}
J_V(J(a)(b)) = [d\ell_b(e), d\ell_a(e)] \qquad (a, b\in \Omega)
\end{equation}
where $e\in \Omega$ and $\frak g(\Omega)_e\subset L(V)$ is the derivation algebra defined in (\ref{e}),
with the derivatives $d\ell_a(e), d\ell_b(e)\in L(V)$ given in (\ref{prod}).
\end{definition}

If we assume the algebraic structure of $V$ defined in
(\ref{prod}), then (\ref{ori2}) can be written as
$$J_V(J(a)(b)) = [L_b, L_a] \qquad (a, b\in \Omega)$$
where $d\ell_a(e)=L_a$ and
$ d\ell_b(e)=L_b$ are the multiplication operators on the algebra $(V,\circ)$.

Given a Finsler symmetric cone $\Omega$ and $a,b, e\in \Omega$, we have $[L(a), L(b)](e)=0$,
following a remark after  (\ref{L}). Hence $[L(a), L(b)] \in \frak g (\Omega)_e$ and one can define a continuous positive
homogeneous  map $J: \Omega \longrightarrow \frak g(\Omega)_e$ by
$$J(a) = [L(a), L(b)] \qquad (a\in \Omega).$$
However, $J$ need not be an orientation. In fact, we shall see that the existence of orientation depends on
additional structure of the cone $\Omega$.

\begin{lemma}\label{j1} Let $J: \Omega \longrightarrow {\frak g(\Omega)_e}$ be an orientation of a
Finsler symmetric
cone $\Omega$. Then $J$ extends to a continuous linear map
$$J_V: V \longrightarrow { \frak g(\Omega)_e}$$ satisfying
$$ J_V( J_V(a)(b)) = [d\ell_b(e), d\ell_a(e)] \qquad (a, b\in V).$$
\end{lemma}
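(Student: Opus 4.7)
The plan is to extend $J$ by the prescription (\ref{ext1}), verify that the resulting map is linear and continuous into $\frak g(\Omega)_e$, and then upgrade the identity (\ref{ori2}) from $\Omega\times\Omega$ to $V\times V$ by bilinearity.

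First, since $V=\Omega-\Omega$, the formula $J_V(\omega_1-\omega_2)=J(\omega_1)-J(\omega_2)$ produces a well-defined additive map by the argument immediately following (\ref{ext1}). Positive homogeneity passes from $J$ to $J_V$, while for $\lambda<0$ one uses $\lambda(\omega_1-\omega_2)=(-\lambda)\omega_2-(-\lambda)\omega_1$ with $-\lambda>0$. Hence $J_V:V\to L(V)$ is $\mathbb{R}$-linear, and because $\frak g(\Omega)_e$ is a linear subspace of $L(V)$, $J_V$ takes values there.

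For continuity, linearity reduces the question to continuity at $0$. Since $\Omega$ is an open neighbourhood of $e$, there exists $r>0$ with $e+v\in\Omega$ for every $v\in V$ satisfying $\|v\|<r$. Writing $v=(e+v)-e$ in that range yields $J_V(v)=J(e+v)-J(e)$, and continuity of $J:\Omega\to\frak g(\Omega)_e$ (built into the definition of orientation) gives $J_V(v)\to 0$ as $v\to 0$. Thus $J_V$ is a bounded linear map.

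It remains to verify the identity on $V\times V$. By (\ref{ori2}), together with $J_V|_\Omega=J$, we have $J_V(J_V(a)(b))=[d\ell_b(e),d\ell_a(e)]$ whenever $a,b\in\Omega$. The right-hand side is bilinear in $(a,b)$ because $d\ell_a(e)=L_a$ depends linearly on $a$ (the product $a\circ b$ in (\ref{prod}) being bilinear) and commutators are bilinear. The left-hand side is bilinear for the same reason: $J_V$ is linear, $J_V(a)\in L(V)$ acts linearly in its argument $b$, so $(a,b)\mapsto J_V(a)(b)$ is bilinear, and applying the linear map $J_V$ preserves this. Since $V=\Omega-\Omega$, agreement on $\Omega\times\Omega$ propagates to $V\times V$, completing the proof. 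I do not expect any serious obstacle; the mildest subtlety is continuity, where we avoid any assumption of normality of $\Omega$ by exploiting only that $\Omega$ is open at $e$ and that $J$ is continuous on $\Omega$.
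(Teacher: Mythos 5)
Your proposal is correct and follows essentially the same route as the paper: extend $J$ by the prescription in (\ref{ext1}), obtain the identity on $V\times V$ from its validity on $\Omega\times\Omega$ together with bilinearity of both sides (the paper just writes out the four-term expansion explicitly), and deduce continuity from openness of $\Omega$ at $e$ and continuity of $J$ there (the paper uses the symmetric variant $J_V(2\lambda a)=J(e+\lambda a)-J(e-\lambda a)$, but your one-sided version $J_V(v)=J(e+v)-J(e)$ for small $\|v\|$ works just as well).
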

\begin{proof}
Let $ J_V: V \longrightarrow { \frak g(\Omega)_e}$  be the extension of $J$ defined in (\ref{ext1}).
Given $a=a_1-a_2$ and $b=b_1-b_2$ in $V$, with $a_1, a_2, b_1, b_2 \in \Omega$, a direct computation
yields
\begin{eqnarray*}
[d\ell_b(e), d\ell_a(e)] &= &[d\ell_{b_1}(e) - d\ell_{b_2}(e), d\ell_{a_1}(e) - d\ell_{a_2}(e)]\\
& =&J_V(J(a_1)(b_1)) - J_V(J(a_2)(b_1))-J_V(J(a_1)(b_2)) +J_V(J(a_2)(b_2))\\
&=& J_V( J(a_1)(b)) - J_V( J(a_2)(b)) =J_V(J_V(a)(b)).
\end{eqnarray*}

It remains to show continuity of $J_V$. Let $(a_n)$ be a sequence in $V$ converging to $a\in V$.
We show that $(J_V(a_n))$ converges to $J_V(a)$ in $\frak g(\Omega)_e$.

Since $e\in \Omega$ and  $\Omega$ is open, there is an open ball $B(0,r)$ centred at $0$ with radius $r>0$
such that $B(0,r) \subset e-\Omega$. We have $\pm \lambda a \in B(0,r)$ for some $\lambda >0$,
which implies $e\pm \lambda a \in \Omega$ and hence $e\pm \lambda a_n \in \Omega$ from some $n$ onwards.
Hence $\lim_n J(e\pm \lambda a_n) = J(e\pm \lambda a)$, which gives
$$J_V(2\lambda a) = J(e+ \lambda a) - J(e - \lambda a) = \lim_{n\rightarrow\infty}
(J(e + \lambda a_n) - J(e- \lambda a_n)) = \lim_{n\rightarrow\infty} J_V(2\lambda a_n),$$
proving $J_V(a_n) \rightarrow J_V(a)$ as $n \rightarrow \infty$.
\end{proof}

Given a linearly homogeneous Finsler symmetric cone $\Omega$ in a real Banach space $V$,
we show below that the existence of
an orientation
\begin{equation*}\label{je}
J: \Omega \longrightarrow \frak g(\Omega)_e
\end{equation*}
of $\Omega$  does not depend on the choice of $e\in \Omega$,
in the sense that if there is one such orientation $J$, then
for each $u\in \Omega$, there is an orientation $J': \Omega \longrightarrow \frak g(\Omega)_u$.
To see this, we first compare the Cartan decompositions induced by $e$ and $u$.

Linear homogeneity entails the existence of $g\in G(\Omega) \subset GL(V)$ such that $$g(u)=e.$$
We note that $G(\Omega) \subset {\rm Aut}\,\Omega$ and $\frak g(\Omega) \subset {\rm Kill}\,\Omega$
by \cite[p.157]{chu2}.
By uniqueness of symmetries,
the symmetry $s_u : \Omega \longrightarrow \Omega$ at $u\in \Omega$ is given by
$$s_u = g^{-1}s_e g \in {\rm Aut}\,\Omega$$
and therefore the adjoint map $Ad(s_u): {\rm Kill}\,\Omega \longrightarrow {\rm Kill}\,\Omega$ is given by
$$Ad(s_u) = Ad(g^{-1})\theta Ad(g)$$
where $\theta = Ad(s_e)$. By (\ref{kp}), we have the Cartan decompositions
$${\rm Kill}\,\Omega = \frak k_e \oplus \frak p_e = \frak k_u \oplus \frak p_u$$
where the vector fields in $\frak p_e$ and $\frak p_u$ are linear, as remarked before.

Observe that $Ad(g^{-1})\frak p_e =\frak p_u$. Indeed $X\in \frak p_e$ if and only if $\theta X=-X$,
which is equivalent to
$$Ad(s_u)Ad(g^{-1})X= Ad(g^{-1})\theta X = -Ad(g^{-1})X.$$
Given $X\in \frak g(\Omega)_e$, we have $Ad(g^{-1})X \in \frak g(\Omega)_u$ since
\begin{equation}\label{u}
\exp t Ad(g^{-1})X (u) = g^{-1}\exp t X g(u) = g^{-1}\exp tX(e) =g^{-1}(e) =u
\end{equation}
for all $t\in \mathbb{R}$.

\begin{lemma}\label{eu}
Let $\Omega$ be a linearly homogeneous Finsler symmetric cone in a real Banach space $V$
and let $J:\Omega \longrightarrow \frak g(\Omega)_e$ be an orientation where $e\in \Omega$.
Then for each $u\in \Omega$, there is an orientation  $J': \Omega \longrightarrow \frak g(\Omega)_u$.
\end{lemma}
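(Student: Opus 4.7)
The plan is to transport the orientation $J$ from the base point $e$ to the base point $u$ using the linear automorphism $g\in G(\Omega)$ supplied by homogeneity, which satisfies $g(u)=e$. Concretely, define
$$J'(a) := Ad(g^{-1})\,J(g(a)) = g^{-1}\,J(g(a))\,g \qquad (a\in \Omega).$$
Continuity, positive homogeneity and additivity of $J'$ are immediate from the corresponding properties of $J$ together with the $\mathbb{R}$-linearity of $g$ and $g^{-1}$. That $J'$ takes values in $\frak g(\Omega)_u$ follows because $J(g(a))\in \frak g(\Omega)_e$ and the calculation in (\ref{u}) shows precisely that $Ad(g^{-1})$ maps $\frak g(\Omega)_e$ into $\frak g(\Omega)_u$.

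The next step is to identify the Jordan multiplications at the new base point. Let $L^u:V\longrightarrow \frak p_u$ denote the analogue of the map $L$ in (\ref{L}) relative to the Cartan decomposition $\mathrm{Kill}\,\Omega=\frak k_u\oplus\frak p_u$, and let $L^u_a:=L^u(a)$ be the corresponding multiplication operator on the Jordan algebra structure of $V$ induced by $u$. I would verify
$$L^u(a)=Ad(g^{-1})L(g(a))\qquad (a\in V).$$
Indeed the right hand side lies in $\frak p_u$ because $Ad(g^{-1})\frak p_e=\frak p_u$, and evaluating at $u$ yields
$$(g^{-1}L(g(a))g)(u)=g^{-1}\bigl(L(g(a))(e)\bigr)=g^{-1}(g(a))=a,$$
which characterises $L^u(a)$ by injectivity of the evaluation map $X\in \frak p_u\mapsto X(u)$. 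Hence $L^u_a=g^{-1}L_{g(a)}g$ for every $a\in V$.

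Finally, from the definition of $J'$ and the linear extension (\ref{ext1}), one obtains the clean formula
$$J'_V(v)=Ad(g^{-1})\,J_V(g(v))\qquad (v\in V).$$
The orientation identity (\ref{ori2}) at the base point $u$ then reduces to a one–line $Ad$-equivariance computation: for $a,b\in\Omega$,
$$J'_V(J'(a)(b))=Ad(g^{-1})\,J_V\bigl(J(g(a))(g(b))\bigr)=Ad(g^{-1})\bigl[L_{g(b)},L_{g(a)}\bigr]=[L^u_b,L^u_a],$$
using the orientation identity for $J$ at $e$ and the fact that $Ad(g^{-1})$ is a Lie algebra automorphism of $L(V)$.

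The only nonformal point is the identification $L^u_a=Ad(g^{-1})L_{g(a)}$, and this is really just the statement that the Cartan decomposition at $u$ is the $Ad(g^{-1})$-image of the one at $e$, which has already been established in the discussion preceding the lemma. Once this is in hand, the rest of the argument is pure $Ad$-transport, so I do not expect any genuine obstacle.
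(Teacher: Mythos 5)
Your proposal is correct and follows essentially the same route as the paper: define $J'(a)=Ad(g^{-1})J(g(a))$, use the computation in (\ref{u}) to land in $\frak g(\Omega)_u$, identify the multiplication operators at $u$ via $\mathcal{L}(a)=Ad(g^{-1})L(g(a))$ by evaluating at $u$, and verify the orientation identity by $Ad$-equivariance. The paper's proof is the same calculation, so there is nothing to add.
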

\begin{proof} By linear homogeneity, there is a map $g\in G(\Omega)$ such that $g(u)=e$.
We show that  the map    $J': \Omega \longrightarrow \frak g(\Omega)_u$ defined by
$$J'(a) = g^{-1}J(g(a)) g \qquad (a\in \Omega)$$
is an orientation, where $g: V\longrightarrow V$ is a continuous linear isomorphism and
we have $J'(a) =   Ad(g^{-1}) J(g(a)) \in \frak g (\Omega)_u$ from (\ref{u}).

Evidently, $J'$ is continuous, positive homogeneous and additive. Let
$${\rm Kill}\, \Omega = \frak k_u \oplus \frak p_u$$
be the Cartan decomposition induced by the symmetry $s_u$, and let $$\mathcal{L}: V \longrightarrow \frak p_u$$
be the linear isomorphism such that
$$\mathcal{L}(a)(u)=a \qquad (a\in V) $$ as in (\ref{L}).
It remains to show
\begin{equation}\label{j'}
J'_V(J'(a)(b)) = [\mathcal{L}(b), \mathcal{L}(a)] \qquad (a, b\in \Omega)
\end{equation}
in view of (\ref{prod}) and (\ref{prod1}). By assumption, we have
$$J_V(J(a)(b)) = [{L}(b), L(a)] \qquad (a, b\in \Omega)$$
where $L: V \longrightarrow \frak p_e$ is the linear isomorphism in (\ref{L}) with respect to the
Cartan decomposition ${\rm Kill}\,\Omega = \frak k_e \oplus \frak p_e$.

Given $a\in \Omega$, we have
$$Ad(g^{-1})L(g(a))(u) = g^{-1}L(g(a))g(u) = g^{-1}L(g(a))(e)= g^{-1}g(a) =a$$
which gives $\mathcal{L}(a)=Ad(g^{-1})L(g(a))$. It follows that
\begin{eqnarray*}
J'_V(J'(a)(b)) &=& g^{-1}J_V(g J'(a)(b)) g= g^{-1}J_V(J(g(a))g(b))g\\
&=& g^{-1}[L(g(b)), L(g(a))]g = Ad(g^{-1})[L(g(b)), L(g(a))]\\
&=& [Ad(g^{-1})L(g(b)), Ad(g^{-1})L(g(a))] = [\mathcal{L}(b), \mathcal{L}(a)]
\end{eqnarray*}
which proves (\ref{j'}).
\end{proof}

A linear map $\delta: V \longrightarrow V$  on a Jordan algebra $(V,\circ)$ is called a {\it derivation} if it satisfies
$$\delta(a\circ b) = \delta(a)\circ b + a\circ \delta(b) \qquad (a,b\in V).$$
The continuous derivations of a JB-algebra $V$ form a closed Lie subalgebra aut$\,V$ of $L(V)$,
which is the Lie algebra of the Banach Lie group
$${\rm Aut}\, V =\{\vp \in GL(V): \vp (x\circ y) = \vp(x)\circ \vp(y)\}$$
consisting of continuous Jordan automorphisms of $V$.

\begin{rem}\label{deri} On a commutative C*-algebra $A$, the Jordan product $\circ$ coincides with the
underlying associative product. In fact, every derivation on $A$ vanishes (cf.\,\cite[Lemma 4.1.2]{sakai}
\end{rem}

\begin{lemma}\label{j2} Let $\Omega$ be a normal linearly homogeneous Finsler symmetric cone in a real Banach space $V$.
Let $e\in \Omega$ and $V$ carry the JB-algebra structure in the order-unit norm $\|\cdot\|_e$.
Then $\frak g(\Omega)_e$ is   the Lie algebra
aut\,$V$ of continuous derivations of $V$.
\end{lemma}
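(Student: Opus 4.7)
The plan is to establish the equality ${\rm aut}\,V = \frak g(\Omega)_e$ by proving both inclusions. By Theorem~\ref{fin}, under the given hypotheses $V$ carries a unital JB-algebra structure in the order-unit norm $\|\cdot\|_e$, with identity $e$, and $\Omega$ is precisely the interior of the positive cone $\{x^2:x\in V\}$; this rigid coupling of the Jordan and order structures is the main algebraic input.

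For the inclusion ${\rm aut}\,V \subset \frak g(\Omega)_e$, let $\delta \in {\rm aut}\,V$ be a continuous derivation. Applying $\delta$ to $e = e\circ e$ gives $\delta(e) = 2\,\delta(e)\circ e = 2\,\delta(e)$, and hence $\delta(e)=0$. The one-parameter group $\exp t\delta$ lies in ${\rm Aut}\,V$, so each $\exp t\delta$ is a continuous Jordan automorphism sending squares to squares, and therefore a linear homeomorphism preserving $\{x^2:x\in V\}$ and its interior $\Omega$. Thus $\exp t\delta \in G(\Omega)$ for every $t \in \mathbb{R}$, and combined with $\delta(e)=0$ this yields $\delta \in \frak g(\Omega)_e$.

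For the converse, fix $X \in \frak g(\Omega)_e$ and set $\varphi_t = \exp tX \in G(\Omega)$. Uniqueness of solutions of the ODE $v'(t)=X\,v(t)$ with $v(0)=e$, together with $X(e)=0$, forces $\varphi_t(e)=e$ for all $t$. Since $\varphi_t(\overline\Omega)=\overline\Omega$, the operator $\varphi_t$ is a continuous unital linear order-isomorphism of the JB-algebra $V$. The decisive step is now to invoke the JB-algebraic analogue of Kadison's theorem, namely that a bijective unital positive linear map between JB-algebras whose inverse is also positive is automatically a Jordan isomorphism (cf.\ \cite{stormer}). Consequently $\varphi_t \in {\rm Aut}\,V$ for every $t$, and differentiating the identity $\varphi_t(x\circ y)=\varphi_t(x)\circ \varphi_t(y)$ at $t=0$ produces the Leibniz rule $X(x\circ y)=X(x)\circ y + x\circ X(y)$, so $X \in {\rm aut}\,V$. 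The principal obstacle of the proof is precisely this passage from order-isomorphism to Jordan isomorphism; once that JB-algebraic fact is cited, the remaining ingredients reduce to the Jordan identity at $e$ and the routine Lie-theoretic translation between one-parameter subgroups of ${\rm Aut}\,V$ and their derivation generators.
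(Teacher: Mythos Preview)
Your proof is correct and follows essentially the same route as the paper's. The paper argues more tersely, characterising $X\in\frak g(\Omega)_e$ directly by the condition that $\exp tX$ be a unital order-automorphism for all $t$, then cites \cite{chu3} (rather than \cite{stormer}) for the Kadison-type fact that such maps are Jordan automorphisms; the reverse inclusion and the passage to derivations are left implicit via the Lie-algebra description of ${\rm aut}\,V$, whereas you spell out both inclusions and the differentiation step.
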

\begin{proof}
We have $X\in \frak g (\Omega)_e \subset L(V)$ if and only if $\exp tX (e) =e$ for all $t\in \mathbb{R}$,
where $\exp t X \in GL(V)$ and $\exp t X(\Omega) = \Omega$.
The latter condition says that both $\exp tX$ and its inverse are positive linear automorphisms of the JB-algebra $V$,
preserving the identity $e$, for all $t\in \mathbb{R}$. It follows from  \cite[Theorem]{chu3}  that
$\exp tX$ is a Jordan automorphism of $V$ for all $t\in \mathbb{R}$.
This proves that $X \in \frak g(\Omega)_e$ if and only if $\exp tX \in {\rm Aut}\, V$ for all $t\in \mathbb{R}$, that is,
$X \in {\rm aut}\,V$.
\end{proof}

\begin{rem}\label{ori} In view of Lemmas \ref{j1} and \ref{j2}, and (\ref{prod1}),
an orientation $J: \Omega \longrightarrow \frak g(\Omega)_e$ of a {\it normal  linearly homogeneous} Finsler symmetric cone $\Omega$ in a real Banach space $V$  can be expressed as a continuous linear map
\begin{equation}\label{ori1}
J: V \longrightarrow {\rm aut}\, V
\end{equation}
satisfying
$$J(J(a)(b)) = [L_b, L_a] \qquad (a, b\in V)$$
where $V$ carries the structure of a JB-algebra with identity $e\in \Omega$ by Theorem \ref{fin}.
\end{rem}

We derive some properties of the orientation $J$ in (\ref{ori1}), where $\Omega$
is a  normal  linearly homogeneous Finsler symmetric cone in $V$, and
$V$ is a JB-algebra with an identity $e\in \Omega$ and the order-unit norm $\|\cdot\|_e$.
The {\it centre} $Z(V)$
of $V$ is defined by
$$Z(V)= \{z\in V: [L_z,L_x] =0~ \forall x\in V\}=\{z\in V:  z\circ(x\circ y)= x\circ (z\circ y) ~ \forall x,y\in V\}$$
which contains $e$ and is an associative JB-algebra isometrically isomorphic to the abelian real C*-algebra
$C(K)$ of real continuous functions on a compact Hausdorff sapce $K$ \cite[Theorem 3.2.2]{stormer}.

Given a derivation $\delta : V \longrightarrow V$, a direct computation reveals that $\delta (Z(V)) \subset Z(V)$.
Hence $\delta$ restricts to a derivation on $Z(V)$ and by Remark \ref{deri}, we have $\delta(Z(V)) = \{0\}$.

\begin{lemma}\label{cl} Let $\Omega$ be a normal linearly homogeneous Finsler symmetric cone in a real
Banach space $V$, which carries the structure of a JB-algebra with identity $e\in \Omega$ and
centre $Z(V)$.
Let  $J: V \longrightarrow {\rm aut}\, V = \frak g(\Omega)_e$ be an orientation. Then we have
\begin{enumerate}
\item[(i)] $J(a)(b) = - J(b)(a) \qquad (a,b \in V)$.
\item[(ii)] $J^{-1}(0)= Z(V)$.
\item[(iii)] $[L_a, L_b] = [J(b), J(a)] \qquad (a,b\in V)$.
\end{enumerate}
\end{lemma}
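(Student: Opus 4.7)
The plan is to prove (i) first, from which (ii) and (iii) follow by short formal manipulations. The key tools are the orientation identity $J(J(a)(b)) = [L_b, L_a]$ (extended to all of $V$ by Lemma \ref{j1}), the derivation properties of $J(a) \in \mathrm{aut}\, V$ (giving $J(a)(e) = 0$, vanishing on $Z(V)$ by Remark \ref{deri}, and $L_{J(a)(b)} = [J(a), L_b]$), and the identity $\|z^2\| = \|z\|^2$ on the commutative JB-subalgebra $Z(V) \cong C(K)$.

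As a preliminary, swapping $a$ and $b$ in the orientation identity and adding gives $J(\sigma(a,b)) = 0$ for $\sigma(a,b) := J(a)(b) + J(b)(a)$; feeding this back, $[L_x, L_{\sigma(a,b)}] = J(J(\sigma(a,b))(x)) = 0$ for all $x \in V$, so $\sigma(a,b) \in Z(V)$. The same reasoning establishes the easy half of (ii): for $z \in \ker J$ and any $a \in V$, $[L_a, L_z] = J(J(z)(a)) = J(0) = 0$, so $z \in Z(V)$.

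For (i) it suffices to show $J(a)(a) = 0$ for every $a \in V$ and then polarize, since $0 = J(a+b)(a+b) = J(a)(a) + \sigma(a,b) + J(b)(b)$. Note $J(a)(a) = \tfrac{1}{2}\sigma(a,a) \in Z(V)$. The Jordan identity $[L_a, L_{a^2}] = 0$ combined with the orientation axiom forces $J(a^2)(a) \in \ker J \subset Z(V)$, and then $\sigma(a, a^2) = 2a\circ J(a)(a) + J(a^2)(a) \in Z(V)$ gives $a\circ J(a)(a) \in Z(V)$. Applying the derivation $J(a)$ to this central element, and using $J(a)(J(a)(a)) = 0$ since $J(a)$ annihilates $Z(V) \ni J(a)(a)$,
\[
0 = J(a)(a\circ J(a)(a)) = J(a)(a)\circ J(a)(a) + a\circ J(a)(J(a)(a)) = J(a)(a)^2.
\]
Since $J(a)(a) \in Z(V) \cong C(K)$, the identity $\|z^2\| = \|z\|^2$ forces $J(a)(a) = 0$, yielding (i).

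The remaining half of (ii) follows from (i) and Remark \ref{deri}: for $z \in Z(V)$, $J(z)(a) = -J(a)(z) = 0$, so $J(z) = 0$. Finally, (iii) follows by applying (i) twice and then the orientation axiom,
\[
[J(b), J(a)](c) = J(J(c)(a))(b) - J(J(c)(b))(a) = [L_a, L_c](b) - [L_b, L_c](a),
\]
whereupon expanding the commutators and using Jordan commutativity cancels the common $c\circ(a\circ b)$ terms, leaving $a\circ(b\circ c) - b\circ(a\circ c) = [L_a, L_b](c)$. The principal obstacle is the chain in (i): the non-obvious step is identifying that $a\circ J(a)(a) \in Z(V)$, which emerges only after pairing the Jordan identity $[L_a, L_{a^2}] = 0$ with the preliminary $\sigma$-identity; once that centrality is in hand, the derivation computation and $\|z^2\| = \|z\|^2$ close the argument.
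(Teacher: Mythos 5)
Your proof is correct, and parts (ii) and (iii) follow the paper's argument essentially verbatim (the easy inclusion $\ker J\subset Z(V)$ from the orientation axiom, the vanishing of derivations on the centre, and the same commutator expansion for (iii)). The interesting divergence is in (i), specifically in how you kill the central element $J(a)(a)$. The paper passes to the closed subalgebra $A(e,a,J(a)(a))$, observes that it is associative (hence a commutative $C(K)$), checks that $J(a)$ restricts to a derivation of it, and invokes Remark \ref{deri} a second time to conclude that $J(a)$ vanishes there, in particular $J(a)(a)=0$. You instead feed the Jordan identity $[L_a,L_{a^2}]=0$ through the orientation axiom to get $J(a^2)(a)\in\ker J\subset Z(V)$, combine with $\sigma(a,a^2)\in Z(V)$ to deduce $a\circ J(a)(a)\in Z(V)$, and then a one-line Leibniz computation gives $J(a)(a)^2=0$, which the JB-norm axiom $\|x^2\|=\|x\|^2$ upgrades to $J(a)(a)=0$. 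Your route is more self-contained: it uses the vanishing of derivations only on $Z(V)$ (already established in the paper before the lemma) rather than a fresh application of the commutative-C*-derivation theorem to an auxiliary subalgebra, and it replaces the structure theory of associatively generated JB-subalgebras with the elementary norm identity. The paper's route is shorter to state but leans harder on cited machinery. Both are valid.
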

\begin{proof}$(i)$ We first show $J^{-1}(0) \subset Z(V)$.
Let $J(z)=0$. Then we have $[L_x, L_z]= J(J(z)(x)) = 0$ for all $x\in V$. Hence $z\in Z(V)$.

Now let $a\in V$. Then $J(J(a)(a)) = [L_a, L_a] =0$ implies  $J(a)(a) \in Z(V)$.
It follows that the closed subalgebra $A(e,a,J(a)(a))$ in $V$ generated by $e$, $a$ and $J(a)(a)$ is
an associative unital JB-algebra, which is isometrically isomorphic to the commutative real C*-algebra
$C(K)$ of real continuous functions on a compact Hausdorff sapce $K$ \cite[Theorem 3.2.2]{stormer}.

Moreover, $J(a)$ restricts to a derivation on $A(e,a,J(a)(a))$, where $J(a)(J(a)(a))=0$ since $J(a)(a)\in Z(V)$.
By Remark \ref{deri}, $J(a)$ vanishes on $A(e,a,J(a)(a))$ and in particular, $J(a)(a)=0$.
As $a\in V$ was arbitrary, we have shown $$J(a+b)(a+b) =0 \qquad (a,b\in V)$$ which yields
$J(a)(b)= -J(b)(a)$

$(ii)$ It suffices to show $Z(V) \subset J^{-1}(0)$.
 Let $z\in Z(V)$. Then for each $x\in V$, the derivation $J(x): V \longrightarrow V$ vanishes on
 $Z(V)$ by a remark above. Hence $(i)$ implies $J(z)(x)= -J(x)(z) =0$ for all $x\in V$, that is, $J(z)=0$.

$(iii)$ Let $a,b\in V$. For each $x\in V$, we have
\begin{eqnarray*}
&& [J(b), J(a)](x) = J(b)J(a)(x) - J(a)J(b)(x)\\
&=& -J(J(a)(x))(b) + J(J(b)(x))(a) \quad ({\rm by} (i))\\
&=& -[L_x, L_a](b) + [L_x, L_b](a)\\
&=& L_aL_x(b) - L_xL_a(b) + L_xL_b(a) - L_bL_x(a)\\
&=& L_aL_b(x) -L_bL_a(x) \quad (\mbox{by commutativity of Jordan product})\\
&=& [L_a, L_b](x).
\end{eqnarray*}
\end{proof}

Let $V$ be a JB-algebra. Then its complexification $V_c:=V+iV$ admits a natural Jordan product,
with an involution $^*$ defined by
$$(x+iy)^* := x-iy \qquad (x,y\in V).$$
By \cite[Theorem 2.8]{w}, the norm of $V$ can be extended to a norm on $V_c$ so that $V_c$
carries the structure of a {\it JB*-algebra}, which means that
 $V_c$ is a complex Jordan algebra as well as a Banach space, equipped with an involution
$*$, such that
$$\|a\circ b\|\leq \|a\|\|b\|, \quad \|a^*\|=\|a\|, \quad \|2a\circ(a\circ a^*)-a^2\circ a^*\|=\|a\|^3 \quad (a,b \in V_c).$$
A complex C*-algebra $A$ is a JB*-algebra in the Jordan product
$$a \circ b = \frac{1}{2}(ab+ba) \qquad (a,b \in A).$$
We refer to \cite[p.174]{book} for more details. A holomorphic characterisation of unital JB*-algebras
has been given in \cite{bku}, which provides a basis for Upmeier's
characterisation in \cite{up} of the open unit balls of complex unital C*-algebras among bounded domains
in complex Banach spaces.

We now characterise  the positive cones of complex unital C*-algebras,
which will be extended to real C*-algebras.

\begin{thm} \label{c} Let $V$ be a real Banach space with an open cone $\Omega$
and closure $\overline\Omega$.
The following conditions are equivalent.
\begin{enumerate}
\item [(i)] $V$, equipped with an equivalent norm, is the hermitian part of a complex unital C*-algebra
with positive cone $\overline \Omega$.
\item[(ii)] $\Omega$ is a normal linearly homogeneous Finsler symmetric cone with an orientation.
\end{enumerate}
\end{thm}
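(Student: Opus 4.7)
The plan is to prove each direction in turn; the substantive content lies in $(ii)\Rightarrow(i)$.

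For $(i)\Rightarrow(ii)$, suppose $V=A_h$ for a unital complex C*-algebra $A$ with positive cone $\overline\Omega$. Under the Jordan product $a\circ b=\tfrac{1}{2}(ab+ba)$, $V$ is a unital JB-algebra with identity $e$, its C*-norm equals the order-unit norm $\|\cdot\|_e$, and $\overline\Omega=\{x^2:x\in V\}$; hence Theorem \ref{fin} delivers the normal, linearly homogeneous Finsler symmetric structure on $\Omega$. For the orientation, I would set
$$J(a)(x):=-\tfrac{i}{2}[a,x]\qquad(a,x\in V).$$
Since $[a,x]$ is skew-hermitian, $J(a)(x)\in V$; a direct expansion shows $J(a)$ is a Jordan derivation, so $J(a)\in\mathrm{aut}\,V=\mathfrak g(\Omega)_e$ by Lemma \ref{j2}. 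The orientation identity then reduces to the elementary computation $[L_a,L_b]=\tfrac{1}{4}\mathrm{ad}[a,b]$ valid in any associative algebra with the anti-commutator product, together with $J(J(a)(b))(x)=-\tfrac{1}{4}[[a,b],x]$.

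For $(ii)\Rightarrow(i)$, Theorem \ref{fin} first provides an equivalent norm $\|\cdot\|_e$ in which $V$ is a unital JB-algebra with identity $e\in\Omega$ and $\overline\Omega=\{x^2:x\in V\}$. By Remark \ref{ori} and Lemma \ref{cl}, the orientation becomes a continuous linear map $J\colon V\to\mathrm{aut}\,V$ with
$$J(J(a)(b))=[L_b,L_a],\qquad J(a)(b)=-J(b)(a),\qquad J(e)=0.$$
Form the complexification $V_c=V+iV$, a unital JB*-algebra by \cite[Theorem 2.8]{w} with involution $(x+iy)^*:=x-iy$ and $V$ as its hermitian part. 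Define a product with values in $V_c$ by
$$ab:=a\circ b+iJ(a)(b)\qquad(a,b\in V),$$
and extend it $\mathbb{C}$-bilinearly to $V_c$.

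The heart of the argument is associativity on $V$ (then on $V_c$ by bilinearity). Expanding $(ab)c$ and $a(bc)$ and separating the $V$- and $iV$-components, $(ab)c=a(bc)$ is equivalent to the two identities
$$J(a\circ b)(c)=a\circ J(b)(c)+b\circ J(a)(c),\qquad (a\circ b)\circ c-b\circ(a\circ c)=-J(a)(J(b)(c)).$$
The first follows by writing $J(a\circ b)(c)=-J(c)(a\circ b)$ via skew-symmetry and expanding using the Jordan-derivation property of $J(c)$. For the second, skew-symmetry and the orientation identity transform $-J(a)(J(b)(c))$ into $J(J(b)(c))(a)=[L_c,L_b](a)$, which by commutativity of $\circ$ equals the left-hand side. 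This is the point at which the orientation enters essentially and is the principal obstacle.

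Once associativity holds, $V_c$ with the new product is a unital associative complex algebra with two-sided identity $e$ (using $J(e)=0$), and the involution is antimultiplicative, reducing for $a,b\in V$ to $J(a)(b)=-J(b)(a)$. Since $\tfrac{1}{2}(ab+ba)=a\circ b$, the Jordan-$*$-structure induced by the new product coincides with the JB*-algebra structure on $V_c$. The Jordan C*-identity of $V_c$ then reads $\|aa^*a\|=\|a\|^3$ in the associative product, from which the C*-identity $\|a^*a\|=\|a\|^2$ and submultiplicativity of the JB*-norm are obtained by standard arguments. Thus $V_c$, in its JB*-norm, is a unital complex C*-algebra whose hermitian part is $V$ and whose positive cone is $\{x^2:x\in V\}=\overline\Omega$, establishing $(ii)\Rightarrow(i)$.
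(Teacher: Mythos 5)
Your direction $(i)\Rightarrow(ii)$ is essentially the paper's (the sign in $J(a)=\mp\tfrac{i}{2}\,[a,\cdot\,]$ is immaterial, since $J$ enters the defining identity quadratically). In $(ii)\Rightarrow(i)$ you genuinely depart from the paper: the paper never proves associativity of the product $ab=L_a(b)-iJ(a)(b)$ directly; it checks instead that $J$ descends to an orientation of $V/Z(V)$ in Upmeier's sense and then invokes \cite[Theorem 5.19]{up}, which delivers in one stroke both the associative extension to $V_c$ and the C*-algebra structure. Your direct associativity computation is correct: splitting $(ab)c=a(bc)$ into real and imaginary parts does reduce it to exactly your two identities, the first following from skew-symmetry (Lemma \ref{cl}(i)) together with the derivation property of $J(c)$, and the second from skew-symmetry, $J(J(b)(c))=[L_c,L_b]$ and commutativity of $\circ$. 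This is a more transparent, self-contained route to the associative $*$-algebra structure than the paper's citation, and it is worth having.

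The gap is in your final paragraph. From the JB*-axiom one does get $\|aa^*a\|=\|a\|^3$, since $2a\circ(a\circ a^*)-a^2\circ a^*=aa^*a$ once the product is associative; but the ``standard arguments'' you invoke to pass from this to the C*-identity presuppose submultiplicativity $\|ab\|\leq\|a\|\,\|b\|$ of the JB*-norm with respect to the \emph{new} product, and that is precisely what is not available. The JB*-axioms control only the symmetrised part $\|ab+ba\|\leq 2\|a\|\|b\|$, whereas $ab-ba=2iJ(a)(b)$ is controlled only by the operator norm of the continuous map $J$, which need not be at most $1$. (Granted submultiplicativity, $\|a\|^3=\|aa^*a\|\leq\|a\|\,\|a^*a\|\leq\|a\|^3$ does force $\|a^*a\|=\|a\|^2$; without it the argument does not start.) This norm-theoretic step is the genuine analytic content of \cite[Theorem 5.19]{up} (equivalently, of a Vidav--Palmer or Gelfand--Naimark type argument for JB*-algebras compatible with an associative product), and it must be either proved or explicitly cited. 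As written, your proof shows that $V_c$ is a unital Banach $*$-algebra in some equivalent norm whose hermitian part is the JB-algebra $V$, but not that the JB*-norm (which restricts to the required order-unit norm on $V$) is a C*-norm.
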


\begin{proof}
$(i) \Rightarrow (ii)$. Let $|\cdot|$ be an equivalent norm on $V$ such that $(V,|\cdot|)$ is the hermitian part $A_h$
of a complex C*-algebra $A$ with identity $e$ and
$$\overline \Omega = \{a\in A: 0\leq a\}= \{a\in A_h: 0\leq a\}.$$
Then $e\in \Omega$ and $|\cdot|=\|\cdot\|_e$. In particular, $(V, \|\cdot\|_e)$ is a unital JB-algebra in the
Jordan product
$$a\circ b = \frac{1}{2}(ab+ba) = L_a(b) \qquad (a, b\in V=A_h)$$
 and by
Theorem \ref{fin}, $\Omega$ is a normal linearly homogeneous Finsler symmetric cone.

We now define an orientation on $\Omega$. By Lemma \ref{j2}, $\frak g(\Omega)_e$ coincides with the
Lie algebra aut\,$V$ of continuous derivations of the Jordan algebra $(V, \circ)$.
Let $J: V \longrightarrow {\rm aut}\, V$ be defined by
$$J(a)(x) =\frac{i}{2}(ax-xa) \qquad (a, x\in V).$$
Then we have
$$ J(J(a)(x))(y) = \frac{1}{4}(xay +yax -axy-yxa) = [L_x, L_a](y) \qquad (a,x,y\in V).$$
Hence $J$ is an orientation on $\Omega$ by Remark \ref{ori}.

$(ii) \Rightarrow (i)$. Let $\Omega$ be a normal linearly homogeneous Finsler symmetric cone
 with an orientation
$J: \Omega \longrightarrow \frak g(\Omega)_e $, where $e\in \Omega$.
 By Theorem \ref{fin}, $V$ is a JB-algebra with identity $e$, in the order-unit norm $\|\cdot\|_e$
 which is equivalent to the original norm of $V$. We have $\frak g(\Omega)_e ={\rm aut}\, V$
 by Lemma \ref{j2}. By Remark \ref{ori}, the orientation has an extension to $V$, denoted by
$J: V \longrightarrow {\rm aut}\, V$.

By $(ii)$ and $(iii)$ in Lemma \ref{cl}, the orientation $J$ induces a continuous linear map
$$\widetilde J : V/Z(V) \longrightarrow {\rm aut}\,V$$ on the quotient $V/Z(V)$, defined by
$$\widetilde J(a+ Z(V) ) := J(a) \qquad (a\in V),$$
which satisfies
$$[\widetilde J(a+Z(V)), \widetilde J(b+Z(V))] = [L_b,L_a]
= \widetilde J(\widetilde J(a+Z(V))(b)+Z(V)) \qquad (a,b\in V).$$
Therefore $\widetilde J$ is an orientation on $V$ in the sense of \cite[Theorem 4.7]{up}.

As noted before, the complexification $V_c=V+iV$ of $(V, \|\cdot\|_e)$ carries the structure of a JB*-algebra
with identity $e$.
Now it follows from \cite[Theorem 5.19]{up} that  the product
$$ab:= L_a(b) - i J(a)(b) \in V_c \qquad (a,b\in V)$$
extends to an associative product on $V_c$, with which $V_c$ becomes a complex C*-algebra
and $V=\{a\in V_c: a^*=a\}$ the hermitian part of $V_c$.
\end{proof}

\begin{definition} \rm Let $\Omega$ be a Finsler symmetric  cone in a real Banach space $V$. An extension
$(\widetilde V, \widetilde\Omega)$
 of $\Omega$, as defined in Definition \ref{ext},
 is called {\it orientable} if $\widetilde\Omega$ is a normal linearly homogeneous
 Finsler symmetric cone in $\widetilde V$ with an orientation
 $J: \widetilde V \longrightarrow {\rm aut\, \widetilde V}$ satisfying
 $$\vp(J(a)(b) ) = J(\vp(b))(\vp(a))  \qquad (a, b\in \widetilde\Omega)$$
where $\vp: \widetilde V \longrightarrow \widetilde V$ is a linear isometry such that $V=\{v\in \widetilde V: \vp(v)=v\}$.
 \end{definition}

\begin{thm}\label{rc} Let $V$ be a real Banach space with an open cone $\Omega$
and closure $\overline\Omega$.
The following conditions are equivalent.
\begin{enumerate}
\item[(i)] $V$, with an equivalent norm, is the hermitian part of a real unital C*-algebra with positive cone
$\overline \Omega$.
\item[(ii)] $\Omega$ is a Finsler symmetric cone with an orientable extension.
\end{enumerate}
\end{thm}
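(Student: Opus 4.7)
The strategy is to reduce Theorem \ref{rc} to the complex case, Theorem \ref{c}, by realising the real C*-structure as the fixed-point set of an involutive $*$-antiautomorphism on a complex ambient C*-algebra, in the spirit of Example \ref{ex}.

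For $(i)\Rightarrow(ii)$, suppose $V=A_h$ in an equivalent norm, with $A$ a real unital C*-algebra and positive cone $\overline\Omega$. Example \ref{ex} embeds $A$ as $\{a\in\mathcal{A}:\vp(a)=a^*\}$ in a complex C*-algebra $\mathcal{A}$ via an involutive $\mathbb{C}$-linear $*$-antiautomorphism $\vp$, and exhibits $(\mathcal{A}_h,\widetilde\Omega)$ as an extension of $(V,\Omega)$, where $\widetilde\Omega$ is the interior of the positive cone of $\mathcal{A}$. Theorem \ref{c} endows $\widetilde\Omega$ with the structure of a normal linearly homogeneous Finsler symmetric cone with orientation $J(a)(b)=\tfrac{i}{2}(ab-ba)$. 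A one-line computation using the $\mathbb{C}$-linearity and antimultiplicativity of $\vp$ verifies the compatibility
\[
\vp(J(a)(b))=\tfrac{i}{2}\bigl(\vp(b)\vp(a)-\vp(a)\vp(b)\bigr)=J(\vp(b))(\vp(a)),
\]
so the extension is orientable. That $\Omega$ itself is a Finsler symmetric cone follows from Theorem \ref{fin} applied to the JB-algebra $V=A_h$ with positive cone $\overline\Omega$.

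For $(ii)\Rightarrow(i)$, let $(\widetilde V,\widetilde\Omega)$ be the given orientable extension, with isometry $\vp$ and orientation $J$. Theorem \ref{c} realises $\widetilde V$, in the equivalent order-unit norm $\|\cdot\|_e$, as the hermitian part of a complex unital C*-algebra $\mathcal{A}=\widetilde V\oplus i\widetilde V$ with positive cone $\overline{\widetilde\Omega}$ and product $ab=a\circ b-iJ(a)(b)$ on $\widetilde V$. Extend $\vp$ $\mathbb{C}$-linearly to $\mathcal{A}$ by $\widetilde\vp(x+iy):=\vp(x)+i\vp(y)$ and set
\[
A:=\{a\in\mathcal{A}:\widetilde\vp(a)=a^*\}.
\]
Once $\widetilde\vp$ is shown to be an involutive $*$-antiautomorphism of $\mathcal{A}$, $A$ becomes a real unital C*-algebra whose hermitian part is $\{a\in\mathcal{A}_h:\vp(a)=a\}=V$ (by the defining property of the extension) and whose positive cone is $A_h\cap\overline{\widetilde\Omega}=V\cap\overline{\widetilde\Omega}=\overline\Omega$, giving $(i)$.

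The main obstacle is therefore verifying that $\widetilde\vp$ is an involutive $*$-antiautomorphism. Preservation of $*$ and $\mathbb{C}$-linearity of $\widetilde\vp$ are built into its definition. Expanding $ab=a\circ b-iJ(a)(b)$ reduces antimultiplicativity to two assertions about $\vp$ on $\widetilde V$: (a) $\vp(a\circ b)=\vp(a)\circ\vp(b)$; and (b) $\vp(J(a)(b))=J(\vp(b))(\vp(a))$ for all $a,b\in\widetilde V$. Item (b) is the orientable-extension hypothesis, extended from $\widetilde\Omega$ to $\widetilde V$ by linearity of $\vp$ and of $J_V$ (Lemma \ref{j1}). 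The delicate point is (a): the plan is to exploit that $\vp$ is a unital linear isometry of the JB-algebra $\widetilde V$ fixing $V\ni e$, and to invoke the classical theorem that a unital surjective linear isometry of a unital JB-algebra is automatically a Jordan isomorphism. The involutivity $\vp^2=\mathrm{id}$ on $\widetilde V$, hence $\widetilde\vp^2=\mathrm{id}$ on $\mathcal{A}$, is part of the extension data. Given (a) and (b), a direct calculation yields
\[
\widetilde\vp(ab)=\vp(a)\circ\vp(b)-iJ(\vp(b))(\vp(a))=\vp(b)\vp(a)=\widetilde\vp(b)\widetilde\vp(a)
\]
on $\widetilde V$, which extends to $\mathcal{A}$ by $\mathbb{C}$-linearity, completing the reduction.
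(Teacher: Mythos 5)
Your proof of $(i)\Rightarrow(ii)$ and the bulk of $(ii)\Rightarrow(i)$ coincide with the paper's: both directions pass through the complexification $\mathcal{A}=\widetilde V+i\widetilde V$ via Theorem \ref{c}, extend $\vp$ complex-linearly to a map $\psi$ on $\mathcal{A}$, and prove antimultiplicativity of $\psi$ by exactly the computation $\psi(ab)=\vp(a\circ b)-i\vp(J(a)(b))=\vp(b)\circ\vp(a)-iJ(\vp(b))(\vp(a))=\vp(b)\vp(a)$, using that the unital isometry $\vp$ is a Jordan automorphism (the paper cites \cite{chu3} for this) together with the compatibility condition, extended off $\widetilde\Omega$ by bilinearity and Lemma \ref{j1}. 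Where you genuinely diverge is the final extraction of the real C*-algebra: you take the fixed-point set $A=\{a\in\mathcal{A}:\psi(a)=a^*\}$ directly, whereas the paper shows that $V$ is a \emph{reversible} JC-algebra in $\mathcal{A}$ (using the antiautomorphism $\psi$ to check $a_1a_2a_3a_4+a_4a_3a_2a_1\in V$) and invokes St\o rmer's result \cite{storm} that the closed real algebra $\mathcal{R}(V)$ generated by such a $V$ is a real C*-algebra with hermitian part $V$. Your route is more economical: closure of $A$ under $*$ and under products follows from $\psi(a^*)=\psi(a)^*$ and antimultiplicativity alone, and $A_h=A\cap\mathcal{A}_h=\{a\in\widetilde V:\vp(a)=a\}=V$; this avoids the reversibility machinery entirely, at the cost of checking that a closed unital real $*$-subalgebra of a complex C*-algebra is a real C*-algebra (standard, but worth a sentence). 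The paper's route instead produces the possibly smaller algebra $\mathcal{R}(V)$ and transfers the burden to \cite{storm}.

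Two loose ends. First, you assert that $\vp^2=\mathrm{id}$ is part of the extension data; Definition \ref{ext} only requires $\vp$ to be a linear isometry with fixed-point set $V$, so involutivity is not given. Fortunately your construction does not need it: involutivity would only be required to conclude $A+iA=\mathcal{A}$, which statement $(i)$ does not ask for. (Surjectivity of $\vp$ matters if the isometry-implies-Jordan-automorphism theorem you invoke requires it, but the paper leans on the same citation, so this is a shared dependence rather than a defect of your argument.) Second, the identification of the positive cone of $A$ with $\overline\Omega$ rests on $V\cap\overline{\widetilde\Omega}=\overline\Omega$, which you assert without argument; the hypothesis only gives $V\cap\widetilde\Omega=\Omega$, and taking closures yields $\overline\Omega\subset V\cap\overline{\widetilde\Omega}$ but not the converse. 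The paper closes this with Lemma \ref{1}; alternatively, for $v\in V\cap\overline{\widetilde\Omega}$ one has $v+\varepsilon e\in\widetilde\Omega\cap V=\Omega$ for every $\varepsilon>0$, whence $v\in\overline\Omega$. Both gaps are easily filled, but they should be filled.
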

\begin{proof}
$(i) \Rightarrow (ii)$. Let $|\cdot|$ be an equivalent norm so that $(V, |\cdot|)$ is the hermitian part
$A_h$ of a real C*-algebra $A$ with identity $e$, and $\overline \Omega =\{a\in A: 0 \leq a\} \subset A_h$.

As $ \Omega$ is open and convex, it is the interior of $\overline \Omega$ and $e\in \Omega$.
We note that $V=A_h$ is a unital JB-algebra in the Jordan product
$$a\circ b= \frac{1}{2}(ab+ba) \qquad (a,b \in V)$$
and the norm $|\cdot|$ is the order-unit norm $\|\cdot\|_e$ defined by $e$. By Theorem \ref{fin},
$\Omega$ is a normal linearly homogeneous Finsler symmetric cone.

It remains to show that $\Omega$ has an orientable extension.
The complexification $\mathcal{A}= A+iA$ can be equipped with  a complex C*-algebra structure
in which the norm and involution extend those on $A$,  and the map
 $\psi: \mathcal{A}\longrightarrow \mathcal{A}$ defined by
$$\psi(a+ib) = a^* + i b^* \qquad (a, b\in A)$$
is an involutary $*$-antiautomprhism such that
 $$A=\{a\in \mathcal{A}: \psi(a)= a^*\}.$$
Let
$$\widetilde V:= \mathcal{A}_h = \{a+ib \in \mathcal{A}: a^*=a, b^*=-b\}$$
be the hermitian part of $\mathcal{A}$. Then
$\psi(\mathcal{A}_h) = \mathcal{A}_h$ and the restriction
$$\vp = \psi|_{\mathcal{A}_h}: \mathcal{A}_h \longrightarrow \mathcal{A}_h$$ is a linear isometry.
Observe that
$$V=A_h =\{a\in \mathcal{A}_h : \vp(a)=a\}$$
which is a closed Jordan subalgebra of $\mathcal{A}_h$.
Since $\mathcal{A}_h$ is a JB-algebra with identity $e$,
 the interior $\widetilde\Omega$ of the positive cone
$\{a\in \mathcal{A}_h: 0 \leq a\}\subset \widetilde V$ contains $e$
and  is a normal linearly homogeneous Finsler symmetric cone in
$\widetilde V$, by Theorem \ref{fin} again.

We have, from Example \ref{ex},
$$\Omega = \widetilde \Omega \cap A_h$$
and $(\widetilde V, \widetilde\Omega)$ is an extension of $\Omega$.

Finally, we show that $ \widetilde \Omega $ is an orientable extension of $\Omega$.
For each $a\in  \widetilde \Omega $, define
$$J(a): \mathcal{A}_h \longrightarrow \mathcal{A}_h$$ by
$$J(a)(b) =  i(ab-ba) \qquad (b \in \mathcal{A}_h).$$
It can be verified readily that $J(a)$ is a  derivation of the Jordan algebra $\mathcal{A}_h$ and
the map $$J:  \widetilde \Omega \longrightarrow {\rm aut}\, \mathcal{A}_h$$
is positive homogeneous, additive and continuous.
For $a, b \in \widetilde\Omega$, we have
$$\vp J(a)(b) = \psi (i(ab-ba) )= {i} \psi(ab-ba) = {i}(\psi(b)\psi(a)- \psi(a)\psi(b)) =J(\vp(b))(\vp(a)).
$$
This  proves that $ \widetilde \Omega $ is an orientable extension of  $\Omega $.

$(ii) \Rightarrow (i)$. Let $ (\widetilde V,\widetilde \Omega) $ be an orientable extension of $\Omega $
with
$$  \Omega = \widetilde \Omega \cap V \quad {\rm and} \quad V=\{v\in \widetilde V: \vp(v)=v\}$$
where $\vp : \widetilde V \longrightarrow  \widetilde V$ is a linear isometry.

By assumption,
$ \widetilde \Omega $ is a normal linearly homogeneous Finsler symmetric cone with an orientation
$J:  \widetilde V \longrightarrow {\rm aut}\, \widetilde V$ satisfying
$$\vp(J(x)(y) )= J(\vp(y))(\vp(x)) \qquad (x,y \in \widetilde\Omega)$$
where
by Lemma \ref{eu}, we may assume aut$\,\widetilde V = \frak g(\Omega)_e$ with $e\in \Omega \subset  \widetilde \Omega $
so that $ \widetilde V$ is a JB-algebra with identity $e$ and the order-unit norm $\|\cdot\|_e$,
which is equivalent to  the original norm of
$\widetilde V$. By Theorem \ref{c}, the
complexification
$$\mathcal{A}:=\widetilde V + i \widetilde V$$
 is a complex C*-algebra with identity $e$
and hermitian part $ \widetilde V$,
in which the norm extends
$\|\cdot\|_e$ and the product is given by
$$ab:= L_a(b) - i J(a)(b) \qquad (a, b \in  \widetilde V + i  \widetilde V)$$
where $L_a(b) = (ab+ba)/2$ is the Jordan product in the JB-algebra $ \widetilde V $
and $$\overline{\widetilde \Omega}=\{ v\in \widetilde V: 0\leq v\}.$$

Since $\vp:  \widetilde V\longrightarrow \widetilde V$ is a linear isometry
 and $\vp(e) =e$, it is a Jordan automorphism of $\widetilde V$ (cf.\,\cite[Corollary]{chu3}).
 Hence
we have $$\vp(L_a(b)) = L_{\vp(a)}(\vp (b)) \qquad (a, b \in \widetilde V).$$
Further,  we can extend $\vp$
 to a Jordan automorphism $\psi$ of $\mathcal{A}=\widetilde V + i \widetilde V$
 by defining
$$\psi (a+i b) = \vp(a) + i \vp(b) \qquad (a, b \in \widetilde V).$$

We note that $V$ is a closed Jordan
subalgebra of  $\widetilde V$ since $a,b\in V$ implies
$$\vp(L_a(b)) = L_{\vp(a)}(\vp (b)) = L_a(b) $$
and hence $L_a(b) \in V$.
In particular, $V$ is a JC-algebra. Further, $V$ is reversible in $ \widetilde V$. Indeed,
given $x,y\in \widetilde\Omega$, we have
\begin{eqnarray*} \psi(xy) &=& \psi(L_x(y) - i J(x)(y) ) = \vp(L_x(y)) - i\vp( J(x)(y) )\\
&=& L_{\vp(y)}(\vp(x)) - iJ(\vp(y))(\vp(x))\\
&=& \vp(y)\vp(x) = \psi(y)\psi(x)
\end{eqnarray*}
Since $\widetilde V = \widetilde\Omega -\widetilde\Omega$ and $\mathcal{A}= \widetilde V + i\widetilde V$,
we have $ \psi(xy)=\psi(y)\psi(x)$ for all $x,y\in \mathcal{A}$.
Hence, for $a_1, a_2, a_3, a_4 \in V$, we have $a_1a_2a_3a_4 + a_4a_3a_2a_1\in \widetilde V$ and
\begin{eqnarray*}
&&\vp(a_1a_2a_3a_4 + a_4a_3a_2a_1) = \psi(a_1a_2a_3a_4) + \psi(a_4a_3a_2a_1)\\
&=& \psi(a_4)\psi(a_3)\psi(a_2)\psi(a_1) + \psi(a_1)\psi(a_2)\psi(a_3)\psi(a_4)\\
&=& \vp(a_4)\vp(a_3)\vp(a_2)\vp(a_1) + \vp(a_1)\vp(a_2)\vp(a_3)\vp(a_4)\\
&=&  a_4a_3a_2a_1 + a_1a_2a_3a_4
\end{eqnarray*}
that is, $a_1a_2a_3a_4 + a_4a_3a_2a_1 \in V$.

It follows from \cite[Remark 2.5]{storm} that the closed real subalgebra $\mathcal{R}(V)$ generated by $V$
in $\mathcal{A}$
is a real C*-algebra with identity $e$ and $V= \mathcal{R}(V)_h$ , where the norm of $V$
is the order-unit norm $\|\cdot\|_e$, which  is equivalent to the original norm of $V$ by normality of $\Omega$.

Finally, we have
$$\Omega = {\widetilde \Omega}\cap V \subset \{a\in \widetilde V: 0\leq a\}\cap V
 =\{a\in V: 0\leq a\}.$$
Hence the open set $\Omega$   is contained in the interior of the positive cone
$\{a\in V: 0\leq a\}$ in $V$. Conversely, given $v$ in the interior of $\{a\in V: 0\leq a\}$ in $V$,
we have  $v\in \widetilde \Omega \cap V$ by Lemma \ref{1} since each $f\in S_e(\widetilde V)$ restricts
to a positive functional $f|_V \in S_e(V)$ and $f(v) >0$. This proves that $\Omega$ is
the interior of the positive cone $\{a\in V: 0\leq a\}$, equivalently, the closure $\overline\Omega$
is the positive cone of the real C*-algebra $\mathcal{R}(V)$.
\end{proof}

\begin{rem} In view of Theorems \ref{fin}, \ref{c} and \ref{rc},
a Finsler symmetric cone with an orientable extension
 is necessarily normal and linearly homogeneous.
Also, a normal linearly homogeneous Finsler symmetric cone with an oreientation
admits an orientable extension.\\
\end{rem}

\noindent {\bf Acknowledgments}.
This work was partly supported by the Engineering and Physical Sciences Research Council, UK (grant number EP/R044228/1).

\end{document}